\def \C{\mathbb{C}}
\def \Z{\mathbb{Z}}
\def \R{\mathbb{R}}
\def \Q{\mathbb{Q}}
\def \D{\mathcal{D}}
\def \vol{\textup{vol}}
\def \trop{\textup{trop}}
\def \SL{\textup{SL}}
\def \GL{\textup{GL}}
\def \codim{\textup{codim}}
\def \pr{\textup{pr}}
\theoremstyle{plain}
\newtheorem{Th}{Theorem}[section]
\theoremstyle{definition}
\newtheorem{Ex}[Th]{Example}
\newtheorem{Def}[Th]{Definition}
\newtheorem{Rem}[Th]{Remark}
\newtheorem{Prob}[Th]{Problem}
\begin{document}
\title{A short survey on Newton polytopes, tropical geometry and ring of conditions of algebraic torus}

\author{Kiumars Kaveh}
\address{Department of Mathematics, University of Pittsburgh,
Pittsburgh, PA, USA.}
\email{kaveh@pitt.edu}
\author{A. G. Khovanskii}
\address{Department of Mathematics, University of Toronto, Toronto,
Canada; Moscow Independent University, Moscow, Russia.}
\email{askold@math.utoronto.ca}
\maketitle

\section{Introduction}
The purpose of this note is to give an exposition of some interesting combinatorics and convex geometry concepts that appear in algebraic geometry in relation to counting the number of solutions of a system of polynomial equations in several variables over complex numbers. This approach belongs to relatively new, and closely related branches of algebraic geometry which are usually referred to as {\it topical geometry} and {\it toric geometry}. These areas make connections between the study of algebra and geometry of polynomials and the combinatorial and convex geometric study of piecewise linear functions. 

Throughout, the coefficients of the polynomials we consider are in the field of complex numbers $\C$. We denote by $\C^* = \C \setminus \{0\}$ the multiplicative group of nonzero complex numbers. The $n$-fold product $(\C^*)^n$ is a multiplicative group 
(more precisely, it is an affine algebraic group). It is called an {\it algebraic torus}. The usual topological torus is $(S^1)^n = S^1 \times \cdots \times S^1$ where $S^1$ is the unit circle (the familiar $2$-dimensional torus or donut shape is $S^1 \times S^1$). The topological torus $(S^1)^n$ sits inside $(\C^*)^n$ and it is regarded as the ``complexification'' of $(S^1)^n$.

This note is written with the hope of being accessible to undergraduate as well as advance high school students in mathematics.

\section{Newton polytope}
To a Laurent polynomial $f(x_1, \ldots, x_n) \in \C[x_1^{\pm 1}, \ldots, x_n^{\pm}]$ one can associate a polytope in $\R^n$ called the {\it Newton polytope of $f$}. 
(Recall that a Laurent polynomial is a linear combination of monomials in $x_1, \ldots, x_n$ where the exponents are allowed to be negative integers as well.)

\begin{Def}[Newton polytope]   \label{def-Newton-polytope}
Let $f(x_1, \ldots, x_n) = \sum_{\alpha = (a_1, \ldots, a_n) \in \Z^n} c_\alpha x_1^{a_1} \cdots x_n^{a_n}$. The {\it Newton polytope} $\Delta(f)$ of $f$ is the polytope in $\R^n$ obtained by taking the convex hull of all the $\alpha \in \Z^n$ with $c_\alpha \neq 0$.   
\end{Def}  

The Newton polytope $\Delta(f)$ is clearly a lattice polytope, that is, all its vertices belong to $\Z^n$.

\begin{Ex}
Let $f(x, y) = y^2 + a_0 + a_2x^2 + a_3x^3$, where $a_0, a_2, a_3 \neq 0$. Then the Newton polytope of $f$ is the polygon in Figure \ref{fig-Newton-polygon}.
\begin{figure}[ht]  \label{fig-Newton-polygon}
\includegraphics[width=6cm]{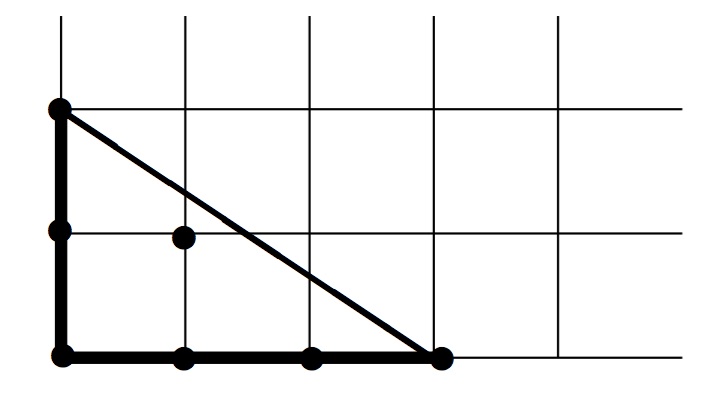} 
\caption{Newton polygon of $f(x, y) = y^2 + a_0 + a_2x^2 + a_3x^3$} 
\end{figure} 
\end{Ex}

The idea of Newton polytope appears in the work of Sir Isaac Newton. The above definition was introduced by the Moscow school of Newton polyhedra theory during the 1970s.

\section{Minkowski sum and Minkowski mixed volume of polytopes}   \label{sec-mixed-vol}
There are natural operations of multiplication by a positive scalar and addition on the set $\mathcal{P} = \mathcal{P}_n$ of all convex polytopes in $\R^n$. Let $c > 0$ and let $\Delta \in \mathcal{P}$ be a convex polytope. Then:
$$c \Delta = \{cx \mid x \in \Delta\},$$ is again a convex polytope. Similarly, let $\Delta_1, \Delta_2 \in \mathcal{P}$ be two convex polytopes. The {\it Minkowski sum} $\Delta_1 + \Delta_2$ of these two polytopes is defined by:
$$\Delta_1 + \Delta_2 = \{ x_1 + x_2 \mid x_1 \in \Delta_1,~ x_2 \in \Delta_2\}.$$

\begin{Prob}
Show that $\Delta_1 + \Delta_2$ is in fact a convex polytope. Moreover, if $\Delta_1$ and $\Delta_2$ are lattice polytopes (that is, their vertices belong to 
$\Z^n$) then $\Delta_1 + \Delta_2$ is also a lattice polytope.
\end{Prob}

\begin{figure}[ht]
\includegraphics[width=12cm]{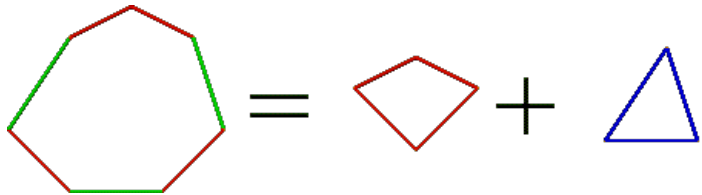}
\caption{Minkowksi sum of a triangle and a quadrangle}
\end{figure}

{We point out that if $\Delta$ is a polytope, the set $-\Delta = \{-x \mid x \in \Delta \}$ is not the inverse of $\Delta$ under Minkowski sum. That is, $\Delta + (-\Delta)$ almost never is equal to $\{0\}$. That is why scalar multiplication of polytopes with negative numbers is not so interesting for us.}

A set which has an addition operation as well as a multiplication with positive scalars is usually called a {\it cone}. The above discussion shows that the space $\mathcal{P}$, of all convex polytopes in $\R^n$, is a cone (with Minkowski sum and multiplication with positive scalars). One can show that the function $\Delta \mapsto \vol_n(\Delta)$ defines a homogeneous polynomial function of degree $n$ on the cone $\mathcal{P}$. This then implies that there is a function $V: \underbrace{\mathcal{P} \times \cdots \times \mathcal{P}}_{n} \to \R$ which satisfies the following properties:
\begin{itemize}
\item[(1)] $V(\Delta_1, \ldots, \Delta_n)$ is a symmetric function in the $\Delta_i$.
\item[(2)] For any polytope $\Delta \in \mathcal{P}$, $V(\Delta, \ldots, \Delta) = \vol_n(\Delta)$.
\item[(3)] $V$ is multi-linear with respect to Minkowski sum and multiplication with positive scalars. That is, for any $c > 0$ and polytopes $\Delta_1', \Delta_1'', \Delta_2, \ldots, \Delta_n \in \mathcal{P}$ we have:
$$V(c\Delta_1' + \Delta''_1, \Delta_2, \ldots, \Delta_n) = cV(\Delta_1', \Delta_2, \ldots, \Delta_n) + V(\Delta''_1, \Delta_2, \ldots, \Delta_n).$$
\end{itemize}
The above properties uniquely determine the function $V$. It is called the {\it Minkowski mixed volume} of convex polytopes.

In fact, if $f: V \to \R$ is any homogeneous polynomial of degree $n$ on a vector space $V$, there is a unique multilinear function $F: \underbrace{V \times \cdots \times V}_n \to \R$ such that for any $v \in V$ we have $F(v, \ldots, v) = f(v)$. The multilinear function $F$ is usually called the {\it polarization of $f$}. It can be computed as:
\begin{equation*}
(-1)^n n! F(v_1, \ldots, v_n) = - \sum_{i} f(v_i) + \sum_{i < j} f(v_i + v_j) + \cdots 
+ (-1)^n f(v_1 + \cdots + v_n). 
\end{equation*}

In particular, one has the following formula which computes the mixed volume in terms of usual volumes. Let $\Delta_1, \ldots, \Delta_n \in \mathcal{P}$ be $n$ convex polytopes. We have:
\begin{multline*}
(-1)^n n! V(\Delta_1, \ldots, \Delta_n) = - \sum_{i} \vol_n(\Delta_i) + \sum_{i < j} \vol_n(\Delta_i + \Delta_j) + \cdots +  \\
+ (-1)^n \vol_n(\Delta_1 + \cdots + \Delta_n). 
\end{multline*}

\section{Number of solutions of a system of equations and the BKK theorem}   \label{sec-BKK}
There is a beautiful theorem due to Bernstein and Kushnirenko that gives an answer for the number of solutions of a system of Laurent polynomial equations in terms  of mixed volume of convex polytopes. This theorem is also sometime called the BKK theorem or the Bernstein-Kushnirenko-Khovanskii theorem (see\cite{Kushnirenko}, \cite{Bernstein} and \cite[Section 5]{Cox}). 

Let $A_1,...,A_n$ be finite subsets in $\Z^n$. For each $i$, let $L_{A_i}$ be subspace of Laurent polynomials given by the span of the monomials $x_1^{a_1} \cdots x_n^{a_n}$ for all $\alpha=(a_1, \ldots, a_n) \in A_i$. Also let $\Delta_i = \Delta(A_i)$ be the convex hull of $A_i$.

\begin{Th}[BKK theorem]   \label{th-BKK}
The number of solutions $x \in (\C^*)^n$ of a system of equations $f_1(x) = \cdots = f_n(x) = 0$ where $f_i$ is a generic element of $L_{A_i}$, is equal to $n!$ times the mixed volume $V(\Delta_1, \ldots, \Delta_n)$.
\end{Th}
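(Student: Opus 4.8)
The plan is to reduce the theorem to the characterization of mixed volume: among functions on $\mathcal{P}^n$ that are symmetric and multilinear with respect to Minkowski sum, $n!\,V$ is the unique one whose value on a diagonal $(\Delta,\dots,\Delta)$ is $n!\,\vol_n(\Delta)$ (equivalently, $n!\,V$ is the polarization of $n!\,\vol_n$). So write $N(A_1,\dots,A_n)$ for the number of common zeros in $(\C^*)^n$ of a generic system $f_1\in L_{A_1},\dots,f_n\in L_{A_n}$, and try to show $N$ has these three properties. First one checks that $N$ is well defined: a standard genericity argument (Bertini's theorem, or generic smoothness over $\C$) shows that outside a proper Zariski-closed subset of $L_{A_1}\times\cdots\times L_{A_n}$ the solution set in $(\C^*)^n$ is finite with all solutions simple, and that its cardinality is constant on a dense open subset; this constant is $N(A_1,\dots,A_n)$. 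If $\dim(\Delta_1+\cdots+\Delta_n)<n$, a monomial change of coordinates makes the $f_i$ depend on fewer than $n$ variables, so there are no isolated solutions and both $N$ and $n!\,V$ vanish; assume otherwise. Multiplying an $f_i$ by a monomial does not change its torus zeros, so $N$ is unchanged under translating $A_i$ by a lattice vector, and $N$ is plainly $\GL_n(\Z)$-invariant and symmetric in its arguments; together with the conservation-of-number argument below (applied to deform away the coefficients of non-vertex monomials), this shows $N$ depends only on the polytopes $\Delta_i=\conv(A_i)$ — in particular enlarging each $A_i$ to $\Delta_i\cap\Z^n$ changes nothing — so we may write $N(\Delta_1,\dots,\Delta_n)$.

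The heart of the argument is multi-additivity, $N(\Delta_1'+\Delta_1'',\Delta_2,\dots,\Delta_n)=N(\Delta_1',\Delta_2,\dots,\Delta_n)+N(\Delta_1'',\Delta_2,\dots,\Delta_n)$. The point is that the Newton polytope of a product is the Minkowski sum: if $g_1$ is generic with Newton polytope $\Delta_1'$ and $h_1$ is generic with Newton polytope $\Delta_1''$, then $g_1h_1$ is a special element of $L_{(\Delta_1'+\Delta_1'')\cap\Z^n}$ whose zero set in $(\C^*)^n$ is $\{g_1=0\}\cup\{h_1=0\}$; intersecting with generic $f_2,\dots,f_n$ and noting that the two pieces are disjoint for generic choices, this special system has exactly $N(\Delta_1',\dots)+N(\Delta_1'',\dots)$ torus solutions. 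What must still be shown is that this \emph{special} count equals the \emph{generic} count $N(\Delta_1'+\Delta_1'',\Delta_2,\dots,\Delta_n)$. This is a conservation-of-number statement: on a suitable projective toric compactification $X$ of $(\C^*)^n$ the number of solutions in $X$, counted with multiplicity, is constant in any flat family of coefficients, so a solution can be lost or gained from the torus only by colliding with the boundary $X\setminus(\C^*)^n$; in the limit such a solution satisfies an initial (facial) system $\In_v f_1=\cdots=\In_v f_n=0$ for some primitive lattice vector $v$, where $\In_v f_i$ is the truncation of $f_i$ to the face of $\Delta_i$ maximizing $\langle \cdot, v\rangle$. Since those faces all lie in translates of the hyperplane $v^\perp$, their Minkowski sum spans at most $n-1$ dimensions, so for generic coefficients $n$ Laurent polynomials with those supports have no common torus zero; hence for generic coefficients no solution ever meets the boundary, the count is constant, and multi-additivity follows. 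This facial analysis — the statement that the BKK count can fail to equal the mixed volume only through a degenerate initial system — is the main obstacle and the genuine content of the theorem.

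Finally, homogeneity $N(k\Delta_1,\Delta_2,\dots,\Delta_n)=k\,N(\Delta_1,\Delta_2,\dots,\Delta_n)$ for $k\in\N$ is immediate from multi-additivity. Being symmetric and Minkowski-additive in each slot, $N$ is, by the polarization formula displayed in Section~\ref{sec-mixed-vol} applied to $\Delta\mapsto N(\Delta,\dots,\Delta)$, the polarization of its own diagonal; it therefore suffices to prove $N(\Delta,\dots,\Delta)=n!\,\vol_n(\Delta)$, which is Kushnirenko's theorem. For the standard simplex this is immediate — $L_A$ consists of affine-linear functions, a generic linear system has one solution in $(\C^*)^n$, and $n!\,\vol_n(\Delta)=1$ — and the general case follows by passing to a projective toric variety $X$ on which the $f_i$ become sections of the ample divisor $D_\Delta$ determined by $\Delta$, observing (by the same facial analysis) that a generic system avoids the boundary, and computing $D_\Delta^n=n!\,\vol_n(\Delta)$ from Ehrhart's theorem (or by a degeneration). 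Since $\vol_n$ polarizes to $V$, we conclude $N(\Delta_1,\dots,\Delta_n)=n!\,V(\Delta_1,\dots,\Delta_n)$. Alternatively one can work on $X$ throughout: a generic system avoids the boundary, so $N$ equals the intersection number $D_1\cdots D_n$ of the corresponding nef divisors, which a direct toric computation identifies with $n!\,V(\Delta_1,\dots,\Delta_n)$.
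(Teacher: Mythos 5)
The paper itself offers no proof of Theorem \ref{th-BKK}: it is stated as a survey item with references to Kushnirenko, Bernstein and Cox--Little--O'Shea, so there is no in-paper argument to compare yours against. Judged on its own, your proposal is the standard Bernstein-style proof and is correct in outline. You reduce to the characterization of mixed volume by polarization, which only requires symmetry and Minkowski multi-additivity of the generic count $N$ together with the diagonal identity $N(\Delta,\dots,\Delta)=n!\,\vol_n(\Delta)$ (Kushnirenko); that reduction is sound, since the polarization identity uses only additivity in each slot. You also correctly isolate the genuine content: the facial (initial-system) analysis showing that for generic coefficients no solution escapes to the boundary of a toric compactification, because the truncated system in a direction $v$ consists of $n$ generic Laurent polynomials whose supports lie, after translation, in the rank-$(n-1)$ lattice $v^\perp\cap\Z^n$, hence has no common torus zero. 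Two places where your sketch leans on unproved standard facts and would need to be filled in for a complete proof: (1) the conservation-of-number step is cleanest if you fix $X$ to be the toric variety of the normal fan of $\Delta_1+\cdots+\Delta_n$ (or a refinement), so that each $\Delta_i$ gives a globally generated line bundle on $X$; then both the special system $(g_1h_1,f_2,\dots,f_n)$ and a generic system compute the same intersection number $D_1\cdots D_n$ on $X$, provided you also check that the solutions of the special system are simple and the two branches $\{g_1=0\}$, $\{h_1=0\}$ are disjoint for generic choices (you assert this, and it is true, but it is part of the argument); (2) the diagonal case rests on the toric degree computation $D_\Delta^n=n!\,\vol_n(\Delta)$, which you justify via Ehrhart/Hilbert polynomial asymptotics --- a legitimate independent route, but it is essentially the whole of Kushnirenko's theorem, so it should be cited or proved rather than treated as a remark. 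With those caveats, the architecture (well-definedness of $N$, reduction to convex hulls, multi-additivity via Newton polytope of a product plus conservation of number, diagonal case, polarization) is exactly the accepted proof of the theorem the paper states without proof.
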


\section{Ring of conditions in the torus $(\C^*)^n$}
The classical intersection theory of varieties associates to a variety $M$ its so-called {\it Chow groups}. The $k$-th Chow group $A^k(M)$ consists of formal linear combinations of codimension $k$ subvarieties in $M$ up to a certain equivalence called {\it rational equivalence}. When the variety $M$ is smooth there is a notion of multiplication among elements of Chow groups and one can form the {\it Chow ring} $A^*(M)$. This multiplication is defined by (properly) intersecting subvarieties in $M$ and recording the so-called {\it intersection multiplicities}.\footnote{The Chow ring is an algebraic version of the homology (intersection) ring on a smooth compact topological manifold (which is dual to its cohomology ring).}

This intersection theory works best when dealing with compact varieties (or in algebraic geometric terminology, complete varieties). We would like here to consider a variant of intersection theory for the (non-compact) variety $(\C^*)^n$. More generally this variant of intersection theory works for other groups such as $\GL(n, \C)$, the group of invertible $n \times n$ complex matrices. In this intersection theory one associates a ring $\mathcal{R}((\C^*)^n)$ to the algebraic group $(\C^*)^n$ called the {\it ring of conditions of $(\C^*)^n$}. Similar to the Chow group, the elements of  
$\mathcal{R}((\C^*)^n)$ are formal linear combinations of subvarieties in $(\C^*)^n$ but considered up to a different and stronger equivalence. In the definition of this equivalence one uses the group structure in $(\C^*)^n$. As in the case of Chow rings, one uses the intersection of subvarieties in $(\C^*)^n$ to define the multiplication in the ring of condition. The definition of ring of conditions goes back to DeConcini and Procesi in their fundamental paper \cite{DeConcini-Procesi}. They introduced it as a natural ring in which one can study many classical problems from enumerative geometry (this is related to Hilbert's fifteenth problem).\footnote{They also showed that, for a so-called reductive group $G$, the ring $\mathcal{R}(G)$ can be realized as a limit of Chow rings of all ``good'' compactifications of the group $G$.} 

We will describe the ring of conditions of $(\C^*)^n$. It can be considered as a generalization of the BKK theorem (Theorem \ref{th-BKK}) to subvarieties in $(\C^*)^n$ which are not necessarily hypersurfaces.\footnote{The existence of such a generalization is not unexpected, the point is that the Chow ring of any smooth projective toric variety is generated by its first Chow group which consists of linear combinations of hypersurfaces (in other words the cohomology ring of such a variety is generated in degree 2).}
We will discuss two descriptions of the ring of conditions $\mathcal{R}((\C^*)^n)$ in terms of combinatorial and convex geometric data. One uses the so-called tropical fans and the other uses volume of convex lattice polytopes. 


Let us explain the definition of the ring of conditions.
Recall that a {\it subvariety} $X \subset (\C^*)^n$ is a solution of a finite number of Laurent polynomial equations. That is, we can find Laurent polynomials $f_1, \ldots, f_s \in \C[x_1^{\pm 1}, \ldots, x_n^{\pm 1}]$ such that $X = \{(x_1, \ldots, x_n) \in (\C^*)^n \mid f_i(x_1, \ldots, x_n) = 0,~ i=1, \ldots, s\}$. A subvariety $X$ is {\it irreducible} if it cannot be written as a union of two other subvarieties in a nontrivial way. We first note that if $X \subset (\C^*)^n$ is a subvariety we can move $X$ around by multiplying by any element of $(\C^*)^n$ (it is a multiplicative group):
\begin{Prob}
Let $g \in (\C^*)^n$ and let $X \subset (\C^*)^n$ be any subvariety. Show that $g \cdot X = \{ g \cdot x \mid x \in X\}$ is also a subvariety. 
\end{Prob}

Consider the set $\mathcal{C}$ of {\it algebraic cycles} in $(\C^*)^n$. That is, every element of $\mathcal{C}$ is a formal linear combination $V = \sum_i a_i V_i$ where $a_i \in \Z$ and $V_i$ is an irreducible subvariety. Clearly, with the formal addition operation of cycles, $\mathcal{C}$ is an abelian group. If all the subvarieties  $V_i$ in $V$ have the same dimension $k$ we say that $V$ is a $k$-cycle. For $0 \leq k \leq n$, the subgroup of $k$-cycles is denoted by $\mathcal{C}_k$. For a cycle $V = \sum_i a_i V_i$ and $g \in (\C^*)^n$ we define $g \cdot V$ to be $\sum_i a_i(g \cdot V_i)$. A $0$-cycle is just a formal linear combination of points. If $P = \sum_i a_i P_i$ is a $0$-cycle where the $P_i$ are points, we let $|P| = \sum_i a_i$. 

The intersection of two algebraic subvarieties $X$ and $Y$ is a union of finitely many irreducible algebraic varieties $Z$. Let us suppose that $X$ and $Y$ {\it intersect transversely}, i.e. $X \cap Y$ is a union of irreducible components $Z$ such that $\codim(Z) = \codim(X) + \codim(Y)$ and moreover, $X$ and $Y$ intersect {\it transversely} at generic points of intersection. 
We define the {\it intersection product} $X \cdot Y$ to be the cycle: $$X \cdot Y = \sum_Z Z.$$   

Define an equivalence relation on the set of algebraic cycles as follows. Let $V, V' \in \mathcal{C}$ be algebraic cycles of dimension $m$. Let $Z$ be a subvariety of complementary dimension $n - m$. One knows that for generic $g \in (\C^*)^n$, the subvariety $g \cdot Z$ intersects both $V$ and $V'$ transversely.\footnote{This, in its general form, is known as {\it Kleiman's transversality theorem}. It is a version of the famous Thom's transversality theorem.} Then, for generic $g \in (\C^*)^n$, the intersection products $V \cdot (g \cdot Z)$ and $V' \cdot (g \cdot Z)$ are defined and are $0$-cycles. We define an equivalence relation $\sim$ on algebraic cycles by saying that $V \sim V'$ if 
for any $(n-m)$-cycle $Z$ and generic $g \in (\C^*)^n$ we have:
\begin{equation} \label{equ-numerical-equiv}
 |V \cap (g \cdot Z)| = |V' \cap (g \cdot Z)|.
 \end{equation}
That is, $V \sim V'$ if they intersect general translates of any subvariety of complementary dimension at the same number of points. One verifies that if $X_1, X_2, Y_1, Y_2$ are algebraic cycles such that $X_1 \sim X_2$ and $Y_1 \sim Y_2$ then $X_1 \cdot Y_1 \sim X_2 \cdot Y_2$. Thus the intersection product of transverse subvarieties induces an intersection operation on the quotient $\mathcal{C} / \sim$. The {\it ring of conditions of $(\C^*)^n$} is 
$\mathcal{C} / \sim$ with the ring structure coming from addition and intersection product of cycles.

{More generally, the above definition works if we replace $(\C^*)^n$ with other groups such as $\GL(n, \C)$ or $\SL(n, \C)$ (and in fact for any so-called {\it connected reductive algebraic group $G$}, in this case one considers the left-right action of $G \times G$ on $G$). Yet more generally, one can define the ring of conditions for a so-called {\it spherical homogeneous space}. Beside matrix groups, other examples include Grassmannians $\textup{Gr}(n, k)$ or the flag variety $\mathcal{F}\ell_n$.} 

{The following shows that the ring of conditions is not well-defined for all groups. For instance, if instead of the multiplicative group $((\C^*)^n, \times)$ we consider the additive group $(\C^n, +)$. This example goes back to De Concinit and Procesi. Take the $3$-dimensional affine space $\C^3$ regarded as an additive group.  
Consider the surface (quadric) $S$ in $\C^3$ defined by the equation $y = zx$. For fixed $z$ the intersection of a horizontal plane $z=a$ and $S$ is the line $y=ax$. This shows that all the lines $y=ax$ must be equivalent in the ring of conditions of $\C^3$.
On the other hand we claim that two skew lines $\ell_1$ and $\ell_2$ cannot be equivalent. This is because one can find a $2$-dimensional plane $P$ such that any translate of $P$ intersects $\ell_1$ but no translate of $P$ intersects $\ell_2$ unless it contains $\ell_2$. The contradiction shows that the ring of conditions is not well-defined for $\C^3$ (see \eqref{equ-numerical-equiv}).}

In the next few sections we consider a piecewise linear analogue of the ring of conditions of $(\C^*)^n$. We denote this piecewise linear analogue by $\mathcal{TR}$.
The elements of the ring $\mathcal{TR}$ are (equivalence classes) of so-called {\it balanced fans}. They are defined in Section \ref{sec-balanced-fan} and their intersection is defined in Section \ref{sec-intersec-balanced-fans}. The notion of tropical variety, introduced in Section \ref{sec-top-var}, makes a connection between the ring $\mathcal{R}((\C^*)^n)$ (defined using algebraic geometry) and the ring $\mathcal{TR}$ (defined using convex and piecewise linear geometry). A main result is that these two rings are in fact isomorphic. {The other main result is that $\mathcal{R}((\C^*)^n)$ is isomorphic to the ring constructed out of the volume polynomial on convex polytopes (Sections \ref{sec-ring-polynomial}) and \ref{sec-polytope-alg-ring-cond}.}

\section{Weighted fans and balancing condition}  \label{sec-balanced-fan}
We begin by recalling the definition of a fan. 
A {\it rational convex polyhedral cone} in $\R^n$ is a convex cone generated by a finite number of vectors in $\Q^n$. It is called strictly convex if it does not contain a line passing through the origin (in other words, it does not contain a $180^\circ$ degree angle). 

\begin{Def}[Fan]  \label{def-fan}
A {\it fan} $\Sigma$ in $\R^n$ is a finite collection of strictly convex rational polyhedral cones in $\R^n$ such that: (1) if $\sigma \in \Sigma$ then any face of $\sigma$ also belongs to $\Sigma$, and (2) if $\sigma_1, \sigma_2 \in \Sigma$ then $\sigma_1 \cap \sigma_2$ is a face of both $\sigma_1$, $\sigma_2$ and belongs to $\Sigma$ as well. The union of all the cones in a fan $\Sigma$ is called the {\it support} of the fan $\Sigma$ and denoted by $|\Sigma|$. Notice that different fans can have the same support. A fan $\Sigma$ is called {\it complete} if $|\Sigma| = \R^n$. 
\end{Def}

Let $\Sigma$ be a fan in $\R^n$. For each $0 \leq i \leq n$ let us denote the set of cones in $\Sigma$ of dimension $i$ by $\Sigma(i)$. A fan $\sigma$ is called a $d$-fan if all the maximal cones in $\Sigma$ have dimension $d$.  

\begin{Def}[Weighted fan]
Let $\Sigma$ be a $d$-fan in $\R^n$.
A {\it weighting} on $\Sigma$ is a function $c: \Sigma(d) \to \R$. We call a fan $\Sigma$ equipped with a weighting $c$ a {\it weighted fan}. If the values of $c$ are integers then we call $c$ an {\it integral weighting} and $(\Sigma, c)$ an {\it integral weighted fan}.
\end{Def}

Now we would like to define when a weighting function $c: \Sigma \to \R$ is {\it balanced}. We start with balancing for $1$-fans. 

\begin{Def}[Balanced $1$-fan]   \label{def-balanced-1-fan}
Let $\Sigma$ be a weighted $1$-fan with weighting function $c$. That is, $\Sigma$ consists of a union of rays $\rho_1, \ldots, \rho_s$ through the origin with corresponding weights $c(\rho_1), \ldots, c(\rho_s)$. For each ray $\rho_i$ let $\xi_i$ be the primitive vector along $\rho_i$, i.e. $\xi_i$ is the vector along $\rho_i$ with integral length $1$, or in other words, $\xi_i$ is the smallest nonzero lattice vector along $\rho_i$. We say that $(\Sigma, c)$ is {\it balanced} if the following vector equation holds:
\begin{equation}   \label{equ-balanced-1-fan}
\sum_{i} c(\rho_i)\, \xi_i = 0.
\end{equation}
\end{Def}

Next we extend the definition of a balanced fan to higher dimensional fans.

\begin{Def}[Balanced $d$-fan]    \label{def-balanced-d-fan}
Let $(\Sigma, c)$ be a weighted $d$-fan with weighting function $c$. We say that $(\Sigma, c)$ is {\it balanced} if the following holds. Let $\tau$ be any codimension $1$ cone in $\Sigma$ (i.e. $\dim(\tau) = d-1$). Let $\sigma_1, \ldots, \sigma_s \in \Sigma(d)$ be $d$-dimensional cones adjacent to $\tau$. Let $N_\tau$ (respectively $N_{\sigma_i}$) be the $(d-1)$-dimensional (respectively $d$-dimensional) lattice generated by $\tau \cap \Z^n$ (respectively $\sigma_i \cap \Z^n$). Then the quotient lattice $N_{\sigma_i}/N_\tau$ has rank $1$, i.e. $N_{\sigma_i}/N_\tau \cong \Z$. Let $\xi_{\sigma_i, \tau}$ be a (non-unique) lattice point in $\sigma_i$ whose image generates the quotient $N_{\sigma_i}/N_\tau$. The balancing condition requires that the vector:
\begin{equation}   \label{equ-balanced-d-fan}
\sum_{i} c(\sigma_i)\, \xi_{\sigma_i, \tau}
\end{equation}
lies in the linear span of $\tau$.
\end{Def}

Finally, we define the notion of normal fan of a convex polytope. Let $\Delta \subset \R^n$ be a full dimensional rational convex polytope. For each  vertex $v$ in $\Delta$ (i.e. a face of dimension $0$), let $C_v$ be the cone at this vertex, namely, $C_v$ is the cone  in $\R^n$ (with apex at the origin) generated by the shifted polytope $(-v) + \Delta$. The {\it dual cone} $\check{C}_v$ is defined as:
$$\check{C}_v = \{ x \in \R^n \mid x \cdot y \geq 0, ~\forall y \in \Delta \}.$$
One shows that the dual cones $\check{C}_v$, for all the vertices $v$ of $\Delta$, fit together to form a complete fan $\Sigma_\Delta$. The fan $\Sigma_\Delta$ is usually called the {\it normal fan} of the polytope $\Delta$. The rays (i.e. the $1$-dimensional cones) in the normal fan are the inward normals to the facets of the polytope $\Delta$.

In the next section (Section \ref{sec-Pascal}) we see that the collection of rays in the normal fan of a lattice polytope has a natural weighting with respect to which it is a balanced $1$-fan. This is related to a classical theorem from geometry known as Pascal's theorem. Similarly, the $(n-1)$-skeleton of the normal fan of a lattice polytope (i.e. the collection of $(n-1)$-dimensional cones in it) can also be equipped with a natural balanced weighting. Each $(n-1)$-dimensional cone in the normal fan is orthogonal to a side (i.e. a $1$-dimensional face) of the polytope. Define the weight of an $(n-1)$-dimensional cone to be the integral length of its corresponding side. One can verify that this gives a balanced weighting on the $(n-1)$-skeleton of the normal fan. This is related to Section \ref{sec-trop-var-Newton-polytope}.

\section{Pascal's theorem}   \label{sec-Pascal}
A classical theorem from geometry (attributed to Pascal) tells us a natural way to construct a balanced $1$-fan from a given lattice polytope (Theorem \ref{th-Pascal-int}).
 
\begin{Th}[Pascal's theorem]    \label{th-pascal}
Let $\Delta$ be a polytope in $\R^n$ with facets $F_1, \ldots, F_s$. For each facet $F_i$ let $n_i$ be the unit normal vector to $F_i$. Also let $\vol_{n-1}(F_i)$ denote the $(n-1)$-dimensional volume of the facet $F_i$. Then the following vector equation holds:
\begin{equation}   \label{equ-Pascal-thm}
\sum_i \vol_{n-1}(F_i) \, n_i = 0.
\end{equation} 
\end{Th}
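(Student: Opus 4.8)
The plan is to prove the vector identity \eqref{equ-Pascal-thm} by reducing it to a one-dimensional statement via the divergence theorem (or equivalently, by a direct computation with oriented volumes). First I would observe that it suffices to check that the dot product of the left-hand side with an arbitrary fixed unit vector $u \in \R^n$ vanishes. Fixing such a $u$, the quantity $\sum_i \vol_{n-1}(F_i)\, (n_i \cdot u)$ is exactly the flux of the constant vector field $X \equiv u$ through the boundary $\partial \Delta$, with $n_i$ the outward (or consistently inward) normals. By the divergence theorem, this flux equals $\int_\Delta \operatorname{div}(X)\, dV = \int_\Delta 0 \, dV = 0$, since a constant field is divergence-free. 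Letting $u$ range over a basis of $\R^n$ then gives the full vector identity.

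Alternatively — and this is the more elementary route, probably preferable given the paper's stated audience — I would give a hands-on proof for polygons ($n = 2$) first and then indicate the general pattern. For a convex polygon with vertices $v_1, \ldots, v_s$ listed counterclockwise, each edge $F_i$ is the segment from $v_i$ to $v_{i+1}$, so $\vol_1(F_i)\, n_i$ equals the edge vector $v_{i+1} - v_i$ rotated by $90^\circ$ (this rotation is a fixed linear map, call it $J$). Hence $\sum_i \vol_1(F_i)\, n_i = J\bigl(\sum_i (v_{i+1} - v_i)\bigr) = J(0) = 0$, because the edge vectors form a closed loop. For general $n$ one triangulates each facet and uses that the "oriented area vector" $\vol_{n-1}(F_i)\, n_i$ is additive under subdivision and, being essentially a sum of wedge products of vertex-difference vectors, telescopes when summed over a closed surface; alternatively one simply invokes the divergence-theorem argument above, which handles all dimensions uniformly.

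The key steps, in order, are: (1) reduce the vector equation to a family of scalar equations by pairing with a test vector $u$; (2) recognize the resulting scalar sum as the outward flux of the constant field $u$ across $\partial\Delta$; (3) apply the divergence theorem and use $\operatorname{div}(u) = 0$; (4) conclude by letting $u$ vary. For the elementary version: (1$'$) handle $n=2$ via the rotation-by-$90^\circ$ trick and the telescoping of edge vectors around the boundary; (2$'$) remark that the higher-dimensional case follows the same principle, the "vectorial area" of a closed polyhedral surface being zero.

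The main obstacle is conceptual rather than computational: making precise the claim that $\vol_{n-1}(F_i)\, n_i$ is the right "vectorial volume" and that these vectorial volumes sum to zero over the closed boundary of $\Delta$. In the divergence-theorem formulation this is automatic, so the only real care needed is orientation bookkeeping — ensuring all $n_i$ are taken as outward normals (or all inward, which only changes an overall sign and so does not affect the identity $=0$) — and, if one wants a fully self-contained elementary argument, justifying the divergence theorem for a convex polytope (which can be done directly by slicing $\Delta$ into thin slabs perpendicular to $u$). If one instead wants to stay purely combinatorial, the subtlety migrates to proving that the sum of oriented facet-volume vectors of a closed piecewise-linear surface vanishes, which one proves by induction on dimension after triangulating the facets.
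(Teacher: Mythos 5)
Your proposal is correct and follows essentially the same routes as the paper: the divergence-theorem argument with a constant test field $u$ is exactly the paper's calculus proof, and your $n=2$ rotation-by-$90^\circ$ telescoping argument is the paper's proof for polygons. The paper additionally sketches an elementary projection argument (pairing facets lying over and under a plane orthogonal to $u$), which is close in spirit to your remark about slicing, so there is no substantive difference to report.
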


\begin{proof}[Proof of Pascal's theorem for polygons in $\R^2$]
Rotate each vector $\ell(F_i) n_i$ by $90$ degrees (counter-clockwise). Here $\ell$ denotes the length which is the $1$-dimensional volume. Then the equation \eqref{equ-Pascal-thm} becomes the sum of edge vectors of the polygon $\Delta$ (oriented counter-clockwise) which is clearly equal to $0$. 
\end{proof}

\begin{proof}[Proof of Pascal's theorem in $3$-dimension using physics]
Consider an infinite pool filled with some incompressible fluid. In this pool consider an imaginary region bounded by the polytope $\Delta$. 
According to Pascal's law the external force (caused by the liquid pressure) applied to each facet is equal to the area of the facet times the unit normal vector to the facet. Since the total external force does not move the polytope, the sum of these forces must be equal to $0$.
\end{proof}

\begin{proof}[Sketch of an elementary proof of Pascal's theorem in $3$-dimension]
To prove the vector equality \eqref{equ-Pascal-thm} it is enough to show that for any vector $u \in \R^n$ we have: $$\sum_i \vol_{n-1}(F_i) \, (n_i \cdot u) = 0.$$ For a vector $u \in \R^n$ let $P_u$ be a plane orthogonal to $u$ and let $\pr_u$ denote the orthogonal projection onto $P_u$. Let $\Delta_0 = \pr_u(\Delta)$. Every $x$ in the interior of $\Delta_0$ has $2$ preimages $x'$, $x''$ in the boundary of $\Delta$. Let $x'$ be the one lying over and $x''$ the one lying under. Let $F'_1, \ldots, F'_k$ (respectively $F''_1, \ldots, F''_\ell$) be the facets lying over (respectively lying under). Since the projections of the $F'_i$ and $F''_j$ both cover $\Delta_0$ we see that the sum of areas of projections of the $F'_i$ is equal to the sum of areas of projections of the  $F''_j$. The equality \eqref{equ-Pascal-thm} follows from this.
\end{proof}

\begin{Prob}
Complete the above proof.
\end{Prob}

In fact, the above proof can be extended to arbitrary dimensions.

\begin{proof}[Proof of Pascal's theorem in general using calculus]
The Pascal theorem follows from divergence theorem applied to constant vector fields equal to standard basis vectors.
\end{proof}



We also have an integral version of Pascal's theorem. First we need few definitions. Recall that a vector $v \in \Z^n$ has integral length $\ell$ if there are $\ell + 1$ lattice points on the line segment joining the origin $0$ and $v$. 
Also let $E \subset \R^n$ be an integral affine subspace, i.e. a rational vector subspace $W \subset \R^n$ shifted by some fixed vector $a \in \Z^n$, i.e. $E = a + W$. Let $\dim(E) = d$. By the {\it integral volume} in the affine subspace $E$ we mean the usual $d$-dimensional volume on $E$ but normalized so that a fundamental domain for the lattice $W \cap \Z^n$ has volume $1$. We denote the integral volume on $E$ by $\widehat{\vol}_E$. 

\begin{Prob}
Show that if $E$ is a codimension $1$ integral affine subspace then $\widehat{\vol}_E = \frac{1}{|\xi|} \vol_E$ where $\xi$ is a primitive vector normal to $E$ (i.e. with integral length $1$).
\end{Prob}

\begin{Prob}[Integral version of Pascal's theorem]  \label{th-Pascal-int}
Let $\Delta \subset \R^n$ be a lattice polytope. Let $F_1, \ldots, F_s$ denote the facets of $\Delta$. For each facet $F_i$ let $\xi_i$ denote the primitive inward normal vector to $F_i$. Then:
\begin{equation}   \label{equ-Pascal-thm-int}
\sum_{i=1}^s \widehat{\vol}_i(F_i) \, \xi_i = 0,
\end{equation} 
where $\widehat{\vol}_i$ denotes the integral volume in the affine span of the facet $F_i$.  
\end{Prob}

\section{Intersection of balanced fans}   \label{sec-intersec-balanced-fans}
Given two balanced fans in $\R^n$ one can define their intersection, which is again a balanced fan. This is usually called the {\it (stable) intersection of fans}. 

Let $(\Sigma, c), (\Sigma', c')$ be balanced fans in 
$\R^n$. Let us assume that $\Sigma$, $\Sigma'$ have complementary dimensions, that is, $\Sigma$ is a $d$-fan and $\Sigma'$ is a $d'$-fan and $d+d' = n$. Take a vector $a' \in \R^n$ and consider the translated fan $a' + \Sigma'$. We say that $\Sigma$ and $a'+\Sigma'$ {\it intersect transversely} if for any point $p \in \Sigma \cap (a'+\Sigma')$, there exists top dimensional faces $\sigma \in \Sigma$, $\sigma' \in \Sigma'$ such that $p$ belongs to the relative interior of $\sigma$ and $a'+\sigma'$. One can show that if two fans $\Sigma$, $\Sigma'$ have complementary dimensions, then for almost all $a' \in \R^n$, $\Sigma$ and $a' + \Sigma'$ intersect transversely.

\begin{Ex}[Transverse and non-transverse intersections of fans.]   \label{ex-tranverse-int-fan}
Consider the $1$-fan in Figure \ref{fig-trop-line}. The intersection of this fan with itself shifted by the vector $(1,1)$ is non-transverse, while its intersection with itself shifted by the vector $(1, 2)$ is transverse.
\end{Ex}

Let $\Lambda_\sigma$ and $\Lambda_{\sigma'}$ denote the lattices $\sigma \cap \Z^n$ and $\sigma' \cap \Z^n$ respectively. 

\begin{Def}[Intersection number of balanced fans with complementary dimensions] \label{def-int-balanced-fan}
Let $(\Sigma, c)$ and $(\Sigma', c')$ be balanced fans with complementary dimensions and let $a' \in \R^n$ be such that $\Sigma$ and $a'+\Sigma'$ intersect transversely. With notation as above, for each $p \in \Sigma \cap (a'+\Sigma')$ define the {\it intersection multiplicity} $m_p$ by:
$$m_p = c(\sigma) c'(\sigma') [\Z^n : \Lambda_{\sigma}+\Lambda_{\sigma'}].$$
The {\it intersection number} of $(\Sigma, c)$ and $(\Sigma', c')$ is then defined to be:
$$(\Sigma, c) \cdot (\Sigma', c') = \sum_{p \in \Sigma \cap (a'+\Sigma')} m_p.$$
\end{Def}

\begin{Rem}   \label{rem-int-number-well-defined}
One proves that the intersection number is well-defined, that is, it is independent of the choice of a generic vector $a' \in \R^n$. For this one needs to use the assumption that $(\Sigma, c)$, $(\Sigma', c')$ are balanced. 
\end{Rem}

The intersection of balanced fans can be extended to all balanced fans (not necessarily with complementary dimension).

\begin{Prob}
Show that the balancing condition (Definition \ref{def-balanced-d-fan}) is equivalent to the statement that the intersection number of the fan with a plane of complementary dimension is well-defined.
\end{Prob}

\begin{figure}[ht]
\includegraphics[width=7cm]{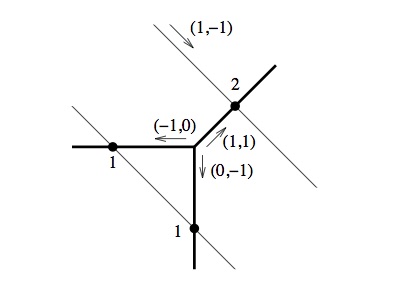} 
\caption{Intersection number of a $1$-fan with a line (in the $2$-dimensional plane) and its invariance under shifting of the line} 
\end{figure} 

Let $\Sigma$ be a fan in $\R^n$. Recall that the {\it support} $|\Sigma|$ is the union of all the cones in $\Sigma$. A {\it subdivision} $\Sigma'$ of $\Sigma$ is a fan obtained by dividing cones in $\Sigma$ into smaller cones. In other words, $\Sigma$ and $\Sigma'$ have the same support and each cone in $\Sigma'$ is contained in some cone in $\Sigma$.

Let $(\Sigma_1, c_1)$, $(\Sigma_2, c_2)$ be two weighted $d$-fans. We say that they are {\it equivalent} if the following hold:
\begin{itemize}
\item[(1)] The two fans $\Sigma_1$ and $\Sigma_2$ have the same support, i.e. $|\Sigma_1| = |\Sigma_2|$.
\item[(2)] The weighting functions $c_1$ and $c_2$ induce the same weighting function on every common subdivision of $\Sigma_1$ and $\Sigma_2$.
\end{itemize}

It is straightforward to verify that the (stable) intersection of fans agrees with the equivalence of balanced fans above.

We can also define addition of equivalence classes of weighted fans. Let $(\Sigma_1, c_1)$, $(\Sigma_2, c_2)$ be two weighted $d$-fans. Let $\Sigma$ be a common subdivision of $\Sigma_1$ and $\Sigma_2$. We then define the sum of $(\Sigma_1, c_1)$, $(\Sigma_2, c_2)$ to be the equivalence class represented by $(\Sigma, c_1 + c_2)$.
 
\begin{Def}[Ring of balanced fans]
For $0 \leq d \leq n$, let $\mathcal{TR}_d$ denote the collection of all balanced $d$-fans in $\R^n$ up to the above equivalence. Let $\mathcal{TR} = \bigoplus_{d=0}^n \mathcal{TR}_d$. 
The set $\mathcal{R}$ together with addition and intersection of balanced fans form a ring which we call the {\it ring of balanced fans}. 
\end{Def}

\section{Tropical variety of a subvariety in $(\C^*)^n$} \label{sec-top-var}
In this section we discuss the notion of tropical variety of a subvariety in $(\C^*)^n$. This is the key idea to translate intersection theoretic data in the ring of conditions $\mathcal{R}((\C^*)^n)$ to piecewise linear data in the ring of balanced fans $\mathcal{TR}$.\footnote{In fact, in our notation $\mathcal{TR}$, the letter $\mathcal{T}$ stands for ``tropical''.}

To a subvariety $X \subset (\C^*)^n$ we can associate its {\it Bergman set}, also called {\it tropical variety of $X$}, which we denote by $\trop(X)$. It is a union of convex polyhedral cones in $\R^n$ and encodes the asymptotic directions on which $X$ can approach ``infinity''.

Let us define $\trop(X)$ more precisely. We say that a lattice vector $k = (k_1, \ldots, k_n) \in \Z^n$ is an {\it asymptotic direction} for $X$, if there is a meromorphic map 
$f = (f_1, \ldots, f_n): (\C, 0) \to X \subset (\C^*)^n$ such that $k$ is the leading exponent of the Laurent series expansion of $f$ at $0$. That is, 
for any $1 \leq i \leq n$ we have $f_i(t) = a_i t^{k_i} + \textup{ higher terms}$.

\begin{Def}[Bergman set or tropical variety of a subvariety]   \label{def-trop-var}
The {\it tropical variety} $\trop(X)$ is defined as the closure of the set of all $ck$ for all $c >0$ and all asymptotic directions $k \in \Z^n$ for $X$.
\end{Def}

The following is a well-known result that goes back to Bergman (\cite{Bergman}).
\begin{Th}    \label{th-trop-var-fan}
Suppose $X \subset (\C^*)^n$ is a subvariety and each irreducible component of $X$ has (complex) dimension $d$. Then $\trop(X)$ is the support of a fan in $\R^n$ and all maximal cones in $\trop(X)$ have (real) dimension $d$. In other words, $\trop(X)$ is a union of finite number of $d$-dimensional convex polyhedral cones in $\R^n$ (that fit together to form a fan).
\end{Th}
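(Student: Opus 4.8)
The plan is to reduce in two stages: first to the case that $X$ is irreducible, then to the hypersurface case, and finally to bootstrap from hypersurfaces to arbitrary $X$ via a toric compactification. The first reduction is immediate: a meromorphic curve $f\colon(\C,0)\to X$ lands inside a single irreducible component of $X$, and both $\trop$ and "support of a fan, pure of dimension $d$" behave well under finite unions; so it suffices to prove the statement for an irreducible variety $X$ of dimension $d$. For the hypersurface case $d=n-1$, write $X=\{f=0\}$ with $f=\sum_{\alpha\in A}c_\alpha x^\alpha$, $A=\supp(f)\subset\Z^n$. The key elementary computation is that a lattice vector $k$ is an asymptotic direction of $X$ if and only if the linear functional $\alpha\mapsto\langle k,\alpha\rangle$ attains its minimum over $A$ at two or more points of $A$: substituting $x_i=a_it^{k_i}+\cdots$ into $f$, the term of lowest $t$-order comes with coefficient $\sum_{\alpha:\langle k,\alpha\rangle\text{ minimal}}c_\alpha a^\alpha$, and this must vanish for the curve to lie on $X$; conversely, if the minimum is attained twice one solves for the leading coefficients and lifts to a convergent curve by a Hensel-type argument. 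Hence $\trop(X)$ is exactly the corner locus of the piecewise linear function $w\mapsto\min_{\alpha\in A}\langle w,\alpha\rangle$, which is the union of the inner normal cones of all positive-dimensional faces of the Newton polytope $\Delta(f)$ — that is, the $(n-1)$-skeleton of the normal fan $\Sigma_{\Delta(f)}$. This is manifestly the support of a fan, and its maximal cones are the normal cones of edges of $\Delta(f)$, hence have dimension $n-1$, establishing the theorem in this case (and connecting it to the weighted $(n-1)$-skeleton mentioned at the end of Section~\ref{sec-balanced-fan}).

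For a general irreducible $X$ of dimension $d$, I would fix a complete fan $\Sigma$ in $\R^n$, form the associated (smooth projective) toric variety $Y_\Sigma\supset(\C^*)^n$, and let $\bar X$ be the closure of $X$ in $Y_\Sigma$. By the orbit–cone correspondence, $Y_\Sigma\setminus(\C^*)^n$ is stratified by torus orbits $O_\sigma$, one for each cone $\sigma\in\Sigma$, with $\dim O_\sigma=n-\dim\sigma$. The theorem then splits into two assertions. First, for a point $k$ in the relative interior of $\sigma$, one shows $k$ is an asymptotic direction of $X$ if and only if $\bar X\cap O_\sigma\neq\emptyset$ (a curve in $X$ with leading exponent $k$ is precisely a curve whose limit in $Y_\Sigma$ lies in $O_\sigma$); since the right-hand condition depends only on $\sigma$, the set $\trop(X)$ is a union of relative interiors of cones of $\Sigma$, hence — after subdividing $\Sigma$ if necessary so that the relevant cones form a subfan — it is the support of a fan. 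Second, one shows that when $\Sigma$ is chosen generically enough that $\bar X$ meets the orbits of $Y_\Sigma$ in the expected dimension, $\dim(\bar X\cap O_\sigma)=d-\dim\sigma$ whenever this intersection is nonempty, so that every cone $\sigma$ with $\bar X\cap O_\sigma\neq\emptyset$ has $\dim\sigma\le d$; an argument using that $\bar X$ is irreducible (hence connected in codimension one) then upgrades this to: every maximal cone of $\trop(X)$ has dimension exactly $d$. An alternative route, which reuses the machinery of Section~\ref{sec-intersec-balanced-fans} directly, is to induct on $\codim X$ by intersecting with a generic hypersurface $H$: one shows $X\cap H$ is irreducible of dimension $d-1$ and $\trop(X\cap H)=\trop(X)\cap\trop(H)$ (a tropical Bertini statement), reducing the structure of $\trop(X)$ to that of $\trop(X\cap H)$ together with the already-settled hypersurface case.

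The hard part is the purity statement — that $\trop(X)$ is pure of dimension $d$ — rather than the mere existence of a fan structure. Concretely, the obstacle is the moving-lemma/genericity input that \emph{some} toric compactification makes $\bar X$ meet every torus orbit in the expected codimension, and then the passage from "$\dim\sigma\le d$ for all relevant $\sigma$" to "all maximal relevant $\sigma$ have $\dim\sigma=d$", which genuinely uses irreducibility of $\bar X$ and is the geometric reflection of the balancing condition of Section~\ref{sec-balanced-fan}. A secondary, more technical point — already present in the hypersurface case — is the "existence" half of the asymptotic-direction correspondence: producing an honest convergent (meromorphic) curve in $X$ realizing a prescribed admissible leading exponent, which is proved by a successive-approximation/Hensel argument on the defining equations.
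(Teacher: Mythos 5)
The paper does not actually prove Theorem \ref{th-trop-var-fan}: it is quoted as a known result with a reference to Bergman (the full statement, including the purity claim that all maximal cones have dimension $d$, is really the Bieri--Groves theorem). So there is no ``paper proof'' to compare against; what you have written is the standard modern route (Kapranov-type computation for hypersurfaces, then toric compactifications and the orbit--cone correspondence, i.e.\ Tevelev's criterion ``$k\in\relpenalty0 \textup{relint}\,\sigma$ lies in $\trop(X)$ iff $\bar X\cap O_\sigma\neq\emptyset$''), and that architecture is sound. Your hypersurface computation is correct and matches the paper's conventions (minima of $\langle k,\alpha\rangle$, inward normals, Theorem \ref{th-tropical-hyper-Newton-polytope}), and you are right that the converse direction needs a Newton--Puiseux/Hensel lifting argument to produce an actual curve.

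That said, as a proof the write-up has genuine gaps precisely at the points you flag. (1) The existence of a fan $\Sigma$ for which $\bar X$ meets every orbit in the expected dimension is a nontrivial theorem (existence of tropical compactifications); ``choose $\Sigma$ generically enough'' is an assertion, not an argument, and without it neither the fan structure with the right dimensions nor purity follows. (2) The purity upgrade is argued with the wrong tool: what one actually uses is not ``connected in codimension one'' but completeness of $\bar X$ together with the fact that an orbit $O_\sigma\cong(\C^*)^{n-\dim\sigma}$ contains no positive-dimensional complete subvarieties, so if $\dim\sigma<d$ and $\bar X\cap O_\sigma$ is positive-dimensional its closure must meet a deeper orbit $O_\tau$ with $\tau\supsetneq\sigma$, contradicting maximality of $\sigma$; irreducibility is needed for balancing (Section \ref{sec-balanced-fan}), not for purity. (3) The curve/orbit dictionary needs both directions: going from a point of $\bar X\cap O_\sigma$ back to a meromorphic arc in $X$ with prescribed leading exponent requires a curve-selection type argument, which you do not supply. (4) The proposed alternative induction is flawed as stated: for generic $H$ the intersection $X\cap H$ need not be irreducible (e.g.\ when $d=1$), and the identity $\trop(X\cap gH)=\trop(X)\cap\trop(H)$ is only valid for generic translates and in the sense of stable intersection, so this route does not bypass the work in (1)--(2). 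In short: correct plan, standard and compatible with the paper's framework, but the hard content of the theorem is still deferred to results you would have to prove or cite (Tevelev; Bieri--Groves), which is exactly what the survey itself does by citing Bergman.
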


\begin{Ex}
Consider the line $x+y+1 = 0$ in $(\C^*)^2$. The tropical variety of this line consists of the union of $3$ rays as in Figure \ref{fig-trop-line}.
\begin{figure}
\includegraphics[width=5cm]{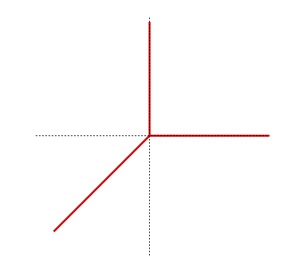}
\caption{Topical variety of a line in the plane. Clearly, it is the support of a $1$-fan.}  \label{fig-trop-line}
\end{figure} 
\end{Ex}

\section{Relation between tropical variety and Newton polytope}   \label{sec-trop-var-Newton-polytope}
Let $f(x_1, \ldots, x_n) = \sum_{\alpha = (a_1, \ldots, a_n) \in \Z^n} c_\alpha x_1^{a_1} \cdots x_n^{a_n}$ be a Laurent polynomial. Let $X_f = \{(x_1, \ldots, x_n) \mid f(x_1, \ldots, x_n) = 0\} \subset (\C^*)^n$ be the hypersurface defined by $f$. Also let $\Delta(f)$ denote the Newton polytope of $f$ (as defined in Definition \ref{def-Newton-polytope}), i.e. the convex hull of the exponents $\alpha$ that appear in $f$, and also let $\trop(X_f)$ be the tropical variety of the hypersurface $X_f$ (Definition \ref{def-trop-var}). There is a direct relationship between these two concepts (they are dual to each other) in the sense we explain below.


\begin{Th}  \label{th-tropical-hyper-Newton-polytope}
Let $f$ be a Laurent polynomial. Let $\Delta = \Delta(f)$ be the Newton polytope of $f$ and $\Sigma = \Sigma_{\Delta_f}$ its corresponding normal fan. With notation as above, the tropical variety $\trop(X_f)$ of the hypersurface $X_f$ is the $(n-1)$-skeleton $\Sigma(n-1)$ of the fan $\Sigma$, that is, the union of all the cones in $\Sigma$ of dimension $n-1$. 
\end{Th}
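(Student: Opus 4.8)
The plan is to analyze the tropical variety $\trop(X_f)$ directly from the asymptotic-direction definition (Definition \ref{def-trop-var}) and match it combinatorially with the $(n-1)$-skeleton of the normal fan $\Sigma_\Delta$. Recall that the cones of $\Sigma_\Delta$ are indexed by faces of $\Delta$: to a face $Q$ of $\Delta$ one associates the cone $\sigma_Q = \{ w \in \R^n \mid \langle w, \cdot\rangle \text{ is minimized on } \Delta \text{ exactly along } Q \}$ (closed version), and $\dim \sigma_Q = n - \dim Q$. So the $(n-1)$-skeleton of $\Sigma_\Delta$ is precisely $\bigcup_{E} \overline{\sigma_E}$ where $E$ ranges over the edges (1-dimensional faces) of $\Delta$, together with all their faces $\sigma_Q$ for $Q \supseteq E$; equivalently it is the union of all $\sigma_Q$ with $\dim Q \geq 1$, i.e. the set of $w \in \R^n$ for which the face $\In_w(\Delta)$ of $\Delta$ on which $w$ is minimized is not a single vertex. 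The goal is thus to show:
\[
w \in \trop(X_f) \iff \In_w(\Delta(f)) \text{ is not a vertex of } \Delta(f).
\]

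First I would set up the combinatorial notation: for $w \in \R^n$ let $m(w) = \min_{\alpha \in \supp(f)} \langle w, \alpha \rangle$, let $\In_w(f) = \sum_{\langle w,\alpha\rangle = m(w)} c_\alpha x^\alpha$ be the initial form, and observe $\Delta(\In_w(f)) = \In_w(\Delta(f))$ is the face of $\Delta$ selected by $w$. The statement to prove becomes: $w$ is (a positive multiple of) an asymptotic direction of $X_f$ iff $\In_w(f)$ is not a monomial. For the forward direction, suppose $k \in \Z^n$ is an asymptotic direction, realized by a meromorphic curve $t \mapsto (a_1 t^{k_1} + \cdots, \ldots, a_n t^{k_n} + \cdots) \in X_f$. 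Substituting into $f$ and collecting the lowest power of $t$: the exponent of the lowest-order term is $m(k) = \min_\alpha \langle k, \alpha\rangle$ and its coefficient is $\In_k(f)$ evaluated at $(a_1, \ldots, a_n) \in (\C^*)^n$. Since $f$ vanishes on the curve, this leading coefficient must vanish, so $\In_k(f)$ has a nonzero point in $(\C^*)^n$ — which forces $\In_k(f)$ to not be a monomial (a nonzero monomial never vanishes on the torus). Taking closures and positive scalings preserves this property since $m$ and $\In_w$ are constant on the relative interior of each normal cone.

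For the reverse direction — which I expect to be the main obstacle — suppose $w \in \R^n$ with $\In_w(f)$ not a monomial; I must produce an actual meromorphic (Puiseux) curve in $X_f$ with leading exponent a positive multiple of $w$. The idea is: $\In_w(f)$ has at least two terms, so its zero set in $(\C^*)^n$ is a nonempty hypersurface; pick a point $(a_1, \ldots, a_n)$ on it, and look for a curve of the form $x_i(t) = a_i t^{w_i}(1 + b_i t^{\epsilon} + \cdots)$ solving $f = 0$. Plugging in, the $t^{m(w)}$-coefficient vanishes by choice of $(a_i)$, and one must solve for the higher-order corrections order by order; this is a Newton–Puiseux / Hensel-type argument and works for generic choices of the starting point (one may need $(a_i)$ to be a smooth point of $\{\In_w(f) = 0\}$, or one may need to first perturb $w$ to a rational vector in the same relative-open normal cone so that $\In_w$ is unchanged). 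I would invoke Theorem \ref{th-trop-var-fan} to know $\trop(X_f)$ is already a union of $(n-1)$-dimensional cones forming a fan, which lets me reduce the verification to comparing top-dimensional cones; alternatively, one can cite the standard fundamental theorem of tropical geometry. Finally I would note a clean reconciliation with the weighting described at the end of Section \ref{sec-balanced-fan}: the multiplicity that $\trop(X_f)$ carries on the cone $\sigma_E$ dual to an edge $E$ equals the lattice length of $E$, matching the balanced weighting on the $(n-1)$-skeleton of $\Sigma_\Delta$, so the identification is not merely of supports but of balanced fans.
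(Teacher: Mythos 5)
The paper states this theorem without proof (it is a survey), so there is no in-text argument to compare yours against; judged on its own, your plan is the standard duality argument and is essentially sound. Your identification of the $(n-1)$-skeleton with $\{w \in \R^n \mid \In_w(\Delta) \text{ is not a vertex}\}$ is correct, with the caveat that the paper's construction of the normal fan assumes $\Delta$ full dimensional (the degenerate case needs a separate convention, which the statement itself glosses over). Your forward inclusion is complete: substituting a meromorphic germ with leading exponents $k$ into $f$ makes the coefficient of $t^{m(k)}$ equal to $\In_k(f)(a)$ with $a \in (\C^*)^n$, a nonzero monomial has no zeros in the torus, and the skeleton is closed and stable under positive scaling, so $\trop(X_f)$ lands inside it.

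The substantive part is the reverse inclusion, and there your text is a sketch rather than a proof, as you yourself flag. Two points need care. First, the finishing move is simpler than you suggest: rational (hence, after scaling, integral) directions $w$ with $\In_w(f)$ not a monomial are dense in the skeleton, and $\trop(X_f)$ is closed by definition, so you do not need Theorem \ref{th-trop-var-fan} at all. Second, the lifting step must produce an honest convergent meromorphic germ, since Definition \ref{def-trop-var} is analytic: the clean route is to apply a $\GL(n,\Z)$ monomial change of coordinates so that $\In_w(f)$ genuinely involves $x_1$ (possible since it has two distinct exponents), freeze $x_i = a_i t^{w_i}$ for $i \geq 2$, and invoke the classical Newton--Puiseux theorem (or, for $a$ a generic root of $\In_w(f)$ with $\partial \In_w(f)/\partial x_1(a) \neq 0$, a Hensel iteration) to solve for $x_1$ as a convergent Puiseux series with leading term $a_1 t^{w_1}$; reparametrizing $t \mapsto t^N$ gives the required meromorphic curve with leading exponent a positive multiple of $w$. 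Citing ``the fundamental theorem of tropical geometry'' instead leaves a translation step, because that theorem is usually formulated for the valuation-theoretic tropicalization over the Puiseux series field, not for the Bergman-type definition via meromorphic curves used here; the bridge between the two is exactly the convergence statement above. Your closing remark on multiplicities (lattice lengths of edges matching the balanced weighting of Section \ref{sec-balanced-fan}) is consistent with the paper but goes beyond the statement being proved and is not established by your argument.
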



\section{The BKK theorem revisited}  \label{sec-BKK-revisited}
We can restate the BKK theorem (Section \ref{sec-BKK}) in terms of intersection of balanced fans.
Let $f_1, \ldots, f_n$ be Laurent polynomials in $x=(x_1, \ldots, x_n)$ with Newton polytopes $\Delta_1, \ldots, \Delta_n$ respectively. For each $i$, let $H_i = \{ x \in (\C^*)^n \mid f_i(x) = 0\}$ be the hypersurface defined by $f_i$. 
One can reformulate the BKK theorem in the following way:

\begin{Th}   \label{th-BKK-balanced-fan}
{The number of points of intersection of generic
hypersurfaces $H_i$ is equal to
to the intersection of the balanced fans corresponding to the $H_i$.}
\end{Th}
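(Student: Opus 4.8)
The plan is to reduce Theorem~\ref{th-BKK-balanced-fan} to a combination of two facts already available: the BKK theorem (Theorem~\ref{th-BKK}, in its mixed-volume form) and Theorem~\ref{th-tropical-hyper-Newton-polytope}, which identifies $\trop(H_i)$ with the $(n-1)$-skeleton of the normal fan $\Sigma_{\Delta_i}$, weighted by integral lengths of the dual edges of $\Delta_i$ (as described at the end of Section~\ref{sec-balanced-fan}). So the content to prove is the purely combinatorial identity
\begin{equation*}
\trop(H_1)\cdot \trop(H_2)\cdots \trop(H_n) \;=\; n!\,V(\Delta_1,\ldots,\Delta_n),
\end{equation*}
where the left side is the iterated stable intersection of balanced $(n-1)$-fans in $\R^n$ from Section~\ref{sec-intersec-balanced-fans}, and the right side is $n!$ times the Minkowski mixed volume from Section~\ref{sec-mixed-vol}.

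First I would check that both sides are symmetric and multilinear in an appropriate sense. The right side is multilinear in the $\Delta_i$ with respect to Minkowski sum and positive scalars, by the defining property~(3) of mixed volume. For the left side, I would show that the assignment $\Delta \mapsto (\text{weighted }(n-1)\text{-skeleton of }\Sigma_\Delta)$ is ``additive'' in the sense that the balanced fan attached to $\Delta' + \Delta''$ equals the stable sum of those attached to $\Delta'$ and $\Delta''$ (on a common refinement of the normal fans), and scales correctly under $\Delta \mapsto c\Delta$; this is essentially the statement that Minkowski sum of polytopes corresponds to common refinement of normal fans with added weights, together with the fact that stable intersection is bilinear over the additive structure on $\mathcal{TR}_{n-1}$. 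Granting this, both sides are symmetric multilinear functions of $(\Delta_1,\ldots,\Delta_n)$, so by polarization it suffices to verify the diagonal case $\Delta_1=\cdots=\Delta_n=\Delta$: namely that the $n$-fold self-intersection of the weighted $(n-1)$-skeleton of $\Sigma_\Delta$ equals $n!\,\vol_n(\Delta) = n!\,V(\Delta,\ldots,\Delta)$.

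To handle the diagonal case I would argue geometrically via the tropical variety. On the algebraic side, by the BKK theorem the number of points in $H_1 \cap \cdots \cap H_n$ for generic $f_i$ with Newton polytope $\Delta$ is $n!\,\vol_n(\Delta)$. On the combinatorial side, the key input is that stable intersection of the tropical varieties computes the same count: for generic translates, a transverse intersection point of $\trop(H_1),\ldots,\trop(H_n)$ sits in the relative interiors of maximal cones $\sigma_i \subset \Sigma_{\Delta}(n-1)$, and the intersection multiplicity $\prod_i c(\sigma_i)\,[\Z^n:\sum_i \Lambda_{\sigma_i}]$ from Definition~\ref{def-int-balanced-fan} should be shown to count, with correct multiplicity, the number of solutions of the ``initial system'' localized near that direction at infinity. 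Concretely I would use the correspondence between cones of $\Sigma_\Delta(n-1)$ and edges of $\Delta$: a maximal cone $\sigma$ in the skeleton is dual to an edge $e$ of $\Delta$ with weight $c(\sigma) = $ integral length of $e$, and the lattice index term accounts for the sublattice spanned by the $n$ edge directions. The upshot is a lattice-point/volume identity: summing $\prod_i (\text{integral length of } e_i^{(\sigma)}) \cdot [\Z^n : \text{span}]$ over all transverse configurations reconstructs $n!$ times the mixed volume — this is exactly the combinatorial heart of the mixed-volume formula for lattice polytopes.

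\emph{The main obstacle} I expect is precisely this last combinatorial identification: proving that the stable intersection multiplicities of the weighted normal-fan skeletons, summed over a generic perturbation, equal $n!\,V(\Delta_1,\ldots,\Delta_n)$ — i.e. showing that stable intersection of tropical hypersurfaces is a faithful shadow of intersection of the algebraic hypersurfaces, \emph{without} simply invoking BKK again circularly. The cleanest way around the circularity is to prove the reduction in one direction only: assume BKK (Theorem~\ref{th-BKK}) as given, and separately establish that $\trop(H_1 \cap \cdots \cap H_n)$, for generic $f_i$, is a finite set of points whose count equals the stable intersection number of the $\trop(H_i)$; this ``tropical Bezout / stable intersection respects numerical count'' statement follows from Theorem~\ref{th-trop-var-fan} applied to $0$-dimensional $X$ together with the well-definedness of the intersection number (Remark~\ref{rem-int-number-well-defined}) and a genericity/tropicalization argument showing $\trop(H_i \cap H_j) = \trop(H_i) \cdot \trop(H_j)$ for generic coefficients. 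Chaining these equalities gives the theorem. If one is willing to take the tropicalization-commutes-with-intersection fact as known (it is, after Bergman and subsequent work), then the remaining verification is routine bookkeeping with lattice indices and normal fans.
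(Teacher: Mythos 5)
Your overall framing is right: since the paper states BKK (Theorem \ref{th-BKK}) and the duality between $\trop(X_f)$ and the weighted $(n-1)$-skeleton of $\Sigma_{\Delta(f)}$ (Theorem \ref{th-tropical-hyper-Newton-polytope}), the whole content of Theorem \ref{th-BKK-balanced-fan} is the identity ``tropical intersection number $= n!\,V(\Delta_1,\ldots,\Delta_n)$'', and the paper itself (being a survey) gives no proof of this, deferring to \cite{Kaz-Khov}. But your proposal does not close that identity; it defers it twice and the second deferral contains a genuine flaw. You propose to avoid circularity by ``applying Theorem \ref{th-trop-var-fan} to $0$-dimensional $X$'' together with a statement of the form $\trop(H_i\cap H_j)=\trop(H_i)\cdot\trop(H_j)$. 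As defined in this paper, $\trop(X)$ is only a \emph{set} (the closure of asymptotic directions), with no weights: for a $0$-dimensional $X$ it is just the origin, regardless of how many points $X$ has, so it cannot ``count'' anything. To make $\trop(H_i\cap H_j)=\trop(H_i)\cdot\trop(H_j)$ meaningful you must first endow the tropicalization of an arbitrary subvariety with multiplicities (a balanced weighted fan, not just a support) and then prove that this weighted tropicalization is compatible with stable intersection for generic translates. That is precisely the substance of the theorem being proved (and of the ring-of-conditions results of Kazarnovskii--Khovanskii), not ``routine bookkeeping with lattice indices''; invoking it as known makes the argument either circular or an appeal to exactly the literature the theorem summarizes.

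A way to make your plan genuinely non-circular is to carry out the combinatorial half directly, without returning to algebraic geometry: your polarization/multilinearity reduction is sound (the map $\Delta\mapsto$ weighted $(n-1)$-skeleton of $\Sigma_\Delta$ is additive for Minkowski sum, and stable intersection is multilinear on $\mathcal{TR}$), and the diagonal (or general) case can be finished by a purely convex-geometric argument: after a generic translation, the transverse intersection points of the skeletons correspond to the mixed cells of a mixed subdivision of $\Delta_1+\cdots+\Delta_n$, the multiplicity $\prod_i c(\sigma_i)\,[\Z^n:\sum_i\Lambda_{\sigma_i}]$ equals the normalized volume of the corresponding mixed cell, and summing gives $n!\,V(\Delta_1,\ldots,\Delta_n)$. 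With that lemma proved, chaining with Theorem \ref{th-BKK} yields the statement, and no tropicalization-of-intersections input is needed. As written, however, the key step is assumed rather than proved, so the proposal has a real gap.
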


{Both statements of the BKK theorem equally work in the ring of condition. That is, one can put the hypersurfaces in general position only by moving them using the group elements.}
\begin{Th}  \label{th-BKK-ring-of-conditions}
{The product of the hypersurfaces in the ring of conditions R is
equal to $n!$ times the mixed volume $V(\Delta_1, \ldots, \Delta_n)$. Moreover, this number is equal to the intersection number of the balanced fans corresponding to the $H_i$.}
\end{Th}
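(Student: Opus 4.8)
The plan is to deduce Theorem~\ref{th-BKK-ring-of-conditions} by combining three ingredients already assembled in the excerpt: the classical BKK theorem (Theorem~\ref{th-BKK}), the reformulation of BKK via intersection of balanced fans (Theorem~\ref{th-BKK-balanced-fan}), and the dictionary between a hypersurface $H_i$ and the $(n-1)$-skeleton of the normal fan of its Newton polytope (Theorem~\ref{th-tropical-hyper-Newton-polytope}). The first assertion — that the product $[H_1]\cdots[H_n]$ in $\mathcal{R}((\C^*)^n)$ equals $n!\,V(\Delta_1,\ldots,\Delta_n)$ — should be obtained by checking that the numerical equivalence $\sim$ defined in \eqref{equ-numerical-equiv} is exactly what makes the intersection product in $\mathcal{R}((\C^*)^n)$ well-defined, so that $[H_1]\cdots[H_n]$ is computed, by definition of $\sim$, as the number of points of $H_1\cap(g_2\cdot H_2)\cap\cdots\cap(g_n\cdot H_n)$ for generic $g_2,\ldots,g_n\in(\C^*)^n$. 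Since translating $H_i$ by $g_i$ does not change its Newton polytope $\Delta_i$ (only rescales coefficients), Theorem~\ref{th-BKK} applies verbatim to this translated system and yields $n!\,V(\Delta_1,\ldots,\Delta_n)$.

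Next I would address the ``moreover'' clause. Here the key point is that the tropical variety construction $X\mapsto\trop(X)$ of Section~\ref{sec-top-var} descends to a ring homomorphism $\mathcal{R}((\C^*)^n)\to\mathcal{TR}$: translating $X$ by a group element $g$ does not change $\trop(X)$ (leading exponents of Laurent series are unaffected by rescaling coordinates), so $\trop$ is constant on $\sim$-classes, and one checks that it carries the intersection product of cycles to the stable intersection of balanced fans. Granting this, for the hypersurfaces $H_i$ we have $\trop(H_i)=\Sigma_{\Delta_i}(n-1)$ with its natural integral weighting (integral length of the dual edge), by Theorem~\ref{th-tropical-hyper-Newton-polytope} together with the weighting remarks at the end of Section~\ref{sec-balanced-fan}. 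Therefore the intersection number of the balanced fans corresponding to the $H_i$ equals the image under $\trop$ of $[H_1]\cdots[H_n]\in\mathcal{R}_0((\C^*)^n)$; but a $0$-dimensional balanced fan is just a multiplicity at the origin, and $\trop$ preserves the degree of a $0$-cycle, so this multiplicity is precisely $|H_1\cap(g_2\cdot H_2)\cap\cdots|=n!\,V(\Delta_1,\ldots,\Delta_n)$. Thus both quantities coincide with the mixed volume, which is the assertion.

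Alternatively, and perhaps more cleanly, one can avoid invoking the full homomorphism $\trop$ and instead argue directly at the level of fans: by Theorem~\ref{th-BKK-balanced-fan} the number of intersection points of generic $H_i$ equals the intersection number of the fans $\trop(H_i)$, and by the first part this same number equals $n!\,V(\Delta_1,\ldots,\Delta_n)$; then the only thing left is to observe that ``generic $H_i$'' can be achieved purely by group translations, so that the count computed by Theorem~\ref{th-BKK-balanced-fan} is the same as the product in $\mathcal{R}((\C^*)^n)$. This reduces the proof to Theorems~\ref{th-BKK}, \ref{th-BKK-balanced-fan} and \ref{th-tropical-hyper-Newton-polytope} plus the translation-invariance observations.

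The main obstacle is verifying that genericity in the sense of the BKK theorem (a generic element of $L_{A_i}$) is genuinely equivalent, for the purpose of counting intersection points, to genericity obtained only by translating a fixed $f_i$ by group elements $g_i\in(\C^*)^n$ — this is what is needed to identify the BKK count with the product in the ring of conditions, and it rests on Kleiman's transversality theorem (cited in the footnote) guaranteeing that generic translates $g_i\cdot H_i$ meet transversely and in the expected number of points. A secondary technical point, if one takes the homomorphism route, is checking that $\trop$ respects stable intersection and not merely supports; I would either cite this as known (it is part of the standard tropicalization package) or reduce it, via the reformulated BKK theorem, to the hypersurface case where it is Theorem~\ref{th-tropical-hyper-Newton-polytope}.
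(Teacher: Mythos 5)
Your reduction of the first assertion contains a genuine gap at exactly the step you flag as ``the main obstacle,'' and that step is in fact the whole content of the theorem. Theorem~\ref{th-BKK} counts solutions for a \emph{generic} member of the coefficient space $L_{A_i}$, whereas the translates $g_i\cdot H_i$ of a fixed $f_i$ sweep out only an $n$-parameter subfamily of $L_{A_i}$ (the coefficients get multiplied by $g_i^{-\alpha}$), which is in general a proper, nowhere dense subvariety of $L_{A_i}$; genericity of $g_i$ therefore does not let you apply Theorem~\ref{th-BKK} ``verbatim.'' Nor can the gap be closed by Kleiman's transversality theorem alone: Kleiman gives that generic translates meet transversely in finitely many reduced points, but it says nothing about \emph{how many} such points there are --- a priori solutions could be lost ``at infinity'' compared with the generic-coefficient count. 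A concrete warning that something beyond translation-invariance of the Newton polytope is needed: take $f_1=f_2=(x+y+1)^2$ in $(\C^*)^2$; the hypersurfaces are lines, generic translates meet in one point, while $2!\,V(\Delta(f_1),\Delta(f_2))=4$. So the count depends on the fixed hypersurfaces through their \emph{reduced} defining equations, and one genuinely has to prove that for squarefree $f_i$ and generic $g_i$ the translated system achieves the BKK number. One standard way to do this is via Bernstein's non-degeneracy criterion: for each nonzero $v\in\R^n$ the face hypersurfaces $\{f_{i,v}=0\}$ are invariant under the one-parameter subgroup determined by $v$, so passing to the quotient $(n-1)$-dimensional torus and applying Kleiman there to $n$ subvarieties of codimension at least one shows that for generic $g$ no face system has a root in the torus; hence the translated system has exactly $n!\,V(\Delta_1,\ldots,\Delta_n)$ solutions counted with multiplicity, and transversality makes every multiplicity equal to $1$. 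Some argument of this kind (or a degeneration/tropical one) must be supplied; it is precisely what the paper's unproved remark ``one can put the hypersurfaces in general position only by moving them using the group elements'' is asserting.

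On the ``moreover'' clause: the paper itself gives no proof of this theorem (it is a survey statement, deferring to the Kazarnovskii--Khovanskii reference), so there is no argument of the paper to match; but note that your first route presupposes that $\trop$ carries the intersection product in $\mathcal{R}((\C^*)^n)$ to stable intersection in $\mathcal{TR}$, which within this survey is essentially the (also unproved) theorem that the ring of conditions coincides with the ring of balanced fans --- citing it makes your argument nearly circular. Your alternative route, via Theorem~\ref{th-BKK-balanced-fan} together with Theorem~\ref{th-tropical-hyper-Newton-polytope} and the lattice-length weighting of $\Sigma_\Delta(n-1)$ from Section~\ref{sec-balanced-fan}, is the right one in this context, but it again hinges on identifying ``generic hypersurfaces'' with ``generic translates of the fixed $H_i$,'' i.e.\ on the same missing step as above.
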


\begin{Ex}[Intersection of two general curves in the plane]
Let $C_1$, $C_2$ be two curves in $(\C^*)^2$ defined by polynomials $f$, $g$ of degrees $d$, $e$ respectively. Let us assume that the coefficients of $f$ and $g$ are general. In this case the classical Bezout theorem implies that the number of intersections of $C_1$ and $C_2$ is equal to $de$. The Newton polytopes of $f$ and $g$ are $d\Delta$ and $e\Delta$ where $\Delta$ is the triangle with vertices $(0,0)$, $(0,1)$ and $(1,0)$. One computes that $2!$ times $V(d\Delta, e\Delta) = de$ which agrees with the Bezout theorem. On the other hand, the tropical fans of $C_1$ and $C_2$ are the fan in Figure \ref{fig-trop-line} with weights of rays equal to $d$ and $e$ respectively. One verifies that the tropical intersection number of these two fans is also equal to $de$. 
\end{Ex}


\section{Ring of conditions and balanced fans}
The following theorem gives a combinatorial/convex geometric 
description of the ring of conditions of $(\C^*)^n$ as the ring of balanced fans.

\begin{Th}
The ring of conditions of the torus $(\C^*)^n$ coincides with the algebra of balanced fans in $\R^n$.
\end{Th}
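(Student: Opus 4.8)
The plan is to realize the isomorphism as the tropicalization map of Section~\ref{sec-top-var}, promoted to a weighted map on algebraic cycles. To an irreducible subvariety $X\subset(\C^*)^n$ of dimension $d$ I attach the weighted $d$-fan $\bigl(\trop(X),c_X\bigr)$, where $\trop(X)$ is a fan by Theorem~\ref{th-trop-var-fan} and the weight $c_X(\sigma)$ of a maximal cone $\sigma$ is the \emph{tropical multiplicity}: the sum of the multiplicities of the irreducible components of the initial degeneration $\In_w X$ for $w$ in the relative interior of $\sigma$ (equivalently, the multiplicity with which the closure $\overline X$ meets the corresponding boundary stratum in a suitable smooth projective toric compactification). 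Extending $\Z$-linearly gives a homomorphism of graded abelian groups $\Phi\colon\bigoplus_d\mathcal{C}_d\to\mathcal{TR}$. Two local points must be settled here: that $c_X(\sigma)$ does not depend on $w$ (routine), and that $(\trop(X),c_X)$ is balanced. Balancing is a statement around each codimension-one cone $\tau$; as observed in Section~\ref{sec-intersec-balanced-fans}, the balancing condition of Definition~\ref{def-balanced-d-fan} is equivalent to the intersection number with a complementary-dimensional plane being translation-invariant, and this invariance for $\trop(X)$ follows from the conservation of the number of points of $X\cap(g\cdot Z)$ under moving $g$ (Kleiman's transversality theorem supplies the generic transversality).

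The engine of the proof is the identity
\begin{equation*}
\bigl|V\cap(g\cdot Z)\bigr|\;=\;\Phi(V)\cdot\Phi(Z)
\end{equation*}
for every $d$-cycle $V$, every subvariety $Z$ of complementary dimension $n-d$, and generic $g\in(\C^*)^n$, the right-hand side being the stable intersection number of balanced fans (Definition~\ref{def-int-balanced-fan}). This is precisely the assertion that generic classical intersection numbers in the torus are computed by tropical stable intersection, multiplicities included: for hypersurfaces and their intersections it is Theorem~\ref{th-BKK-ring-of-conditions}, and the general case is obtained by degenerating to initial ideals and matching, cone by cone, the lattice index $[\Z^n:\Lambda_\sigma+\Lambda_{\sigma'}]$ of Definition~\ref{def-int-balanced-fan} with the local intersection multiplicity of $V$ and $g\cdot Z$ at a point of $\trop(V)\cap\trop(Z)$. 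Granting the identity, the rest is formal. First, $\Phi$ kills the relation $\sim$ of \eqref{equ-numerical-equiv}: if $V\sim V'$ then $\Phi(V)\cdot\Phi(Z)=\Phi(V')\cdot\Phi(Z)$ for every complementary $Z$, and since (by surjectivity, proved below) every balanced $(n-d)$-fan is some $\Phi(Z)$ and the stable pairing $\mathcal{TR}_d\times\mathcal{TR}_{n-d}\to\R$ is nondegenerate (again below), we get $\Phi(V)=\Phi(V')$; thus $\Phi$ descends to $\overline\Phi\colon\mathcal{R}((\C^*)^n)\to\mathcal{TR}$. Second, $\overline\Phi$ is injective: if $V\not\sim V'$ there is a complementary $Z$ and generic $g$ with $\bigl|V\cap(g\cdot Z)\bigr|\neq\bigl|V'\cap(g\cdot Z)\bigr|$, whence $\Phi(V)\neq\Phi(V')$ by the identity. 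Third, $\overline\Phi$ is a ring homomorphism: the product in $\mathcal{R}((\C^*)^n)$ is intersection after a generic translate, and the identity says that translating the second factor by a generic $g$ tropicalizes $X\cap(gY)$ to the stable intersection $\trop(X)\cap\trop(Y)$ with the correct weights.

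It remains to prove surjectivity of $\overline\Phi$ and nondegeneracy of the stable pairing; both are standard toric facts. For surjectivity in codimension one, Theorem~\ref{th-tropical-hyper-Newton-polytope} identifies the edge-length-weighted $(n-1)$-skeleton of the normal fan of a lattice polytope $\Delta$ with $\trop(X_f)$ for $f$ a generic Laurent polynomial of Newton polytope $\Delta$; an arbitrary integral balanced $(n-1)$-fan is the difference of two such (its weights are those of a virtual polytope, i.e.\ a formal difference of lattice polytopes), hence lies in the image of $\Phi$. For general $d$ one uses that the Chow ring of a smooth projective toric variety is generated in degree one --- equivalently, the Fulton--Sturmfels description of the Chow ring as the ring of Minkowski weights, in which every weight is a sum of products of codimension-one weights --- to write any balanced $d$-fan, up to the equivalence of Section~\ref{sec-intersec-balanced-fans}, as a stable intersection of $n-d$ balanced $(n-1)$-fans; by the multiplicativity just established it is $\overline\Phi$ of the corresponding intersection product of hypersurface classes. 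Nondegeneracy of the stable pairing comes from the same circle of ideas: a nonzero balanced $d$-fan, refined onto a complete smooth projective fan $\Sigma$, is a nonzero class in $A_d(X_\Sigma)$, which pairs nontrivially with $A^d(X_\Sigma)$ by Poincar\'e duality on the smooth projective variety $X_\Sigma$, and under Fulton--Sturmfels this pairing is the stable intersection of the underlying balanced fans. This completes the proof that $\overline\Phi$ is an isomorphism of graded rings.

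\textbf{The main obstacle} is entirely the identity of the second paragraph: that a generic classical intersection number in $(\C^*)^n$ agrees with the stable tropical intersection number, with matching multiplicities. Everything else is bookkeeping or a quotation of known toric geometry --- the fan structure is Theorem~\ref{th-trop-var-fan}, balancing and well-definedness of the tropical multiplicities are the local reformulation of Definition~\ref{def-balanced-d-fan}, and surjectivity reduces to Theorem~\ref{th-tropical-hyper-Newton-polytope} plus degree-one generation. One caveat of formulation: if the weightings on fans are allowed to be real rather than integral, then not every real balanced fan is the tropicalization of an honest cycle, and one should interpret the theorem after extending scalars, $\mathcal{R}((\C^*)^n)\otimes\R\cong\mathcal{TR}$; with integral weights the only realizability input needed is in codimension one, where it is furnished by Theorem~\ref{th-tropical-hyper-Newton-polytope}.
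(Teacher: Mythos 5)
First, a point of comparison: the paper does not actually prove this theorem --- it is stated as a survey result, with the proof deferred to the literature (notably \cite{Kaz-Khov} and the circle of ideas around Minkowski weights), so there is no in-paper argument to measure you against. Your outline does follow the standard route of that literature: tropicalize cycles with multiplicities, show the classical intersection number of generic translates equals the stable tropical intersection number, and then get injectivity from the definition of $\sim$ in \eqref{equ-numerical-equiv}, surjectivity from codimension-one realizability (Theorem \ref{th-tropical-hyper-Newton-polytope}) plus degree-one generation of the Chow ring of a smooth projective toric variety, and nondegeneracy of the tropical pairing from Poincar\'e duality on a smooth projective toric compactification. The formal bookkeeping (descent of $\Phi$ through $\sim$, injectivity, multiplicativity, the caveat that with real weights one should read the statement after tensoring with $\R$) is arranged correctly and without circularity at that level.

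The genuine gap is the one you yourself flag: the identity $|V\cap(g\cdot Z)|=\Phi(V)\cdot\Phi(Z)$, with multiplicities matching the lattice index of Definition \ref{def-int-balanced-fan}, is not an input you may quote but is essentially the entire mathematical content of the theorem; "degenerating to initial ideals and matching, cone by cone" names the strategy without supplying the argument (one needs, e.g., a suitable toric compactification in which the translated intersection stays away from bad strata, properness/flatness to see that the generic count is constant, and a local computation identifying intersection multiplicities of initial degenerations with the index $[\Z^n:\Lambda_\sigma+\Lambda_{\sigma'}]$). A second, related soft spot: you derive the balancing of $(\trop(X),c_X)$ from the invariance of classical intersection numbers under translation, but translating that invariance into the tropical well-definedness statement already presupposes the comparison identity (at least for intersections with subtori), so as ordered your argument is circular there; balancing of the weighted tropicalization should be established independently (it is a known theorem, but it needs its own proof, not a reduction to the main identity). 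So what you have is a correct and well-organized reduction of the theorem to the classical-equals-tropical intersection formula and to the balancing of tropicalizations, rather than a proof; since the paper itself proves nothing here, that reduction is reasonable expository work, but it should be presented as such.
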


In the remainder of these notes we give another description of the ring of conditions $\mathcal{R}((\C^*)^n)$ in terms of volume of polytopes (see \cite{Kaz-Khov}). The next section (Section \ref{sec-ring-polynomial}) discusses a general construction that associates a ring to a homogeneous polynomial on a vector space. This will be applied in Section \ref{sec-polytope-alg-ring-cond} to the volume polynomial on the vector space of virtual polytopes to give an alternative description of the ring of conditions. 

\section{Algebra associated to a polynomial}   \label{sec-ring-polynomial}

{Let $V$ be a vector space over the field $\R$. Let us recall what it means for a function $P: V \to \R$ to be {\it polynomial}. First let us assume that $V$ is finite dimensional and fix a vector space basis $\mathcal{B} = \{b_1, \ldots, b_m\}$. Then every element $v \in V$ can be 
written uniquely as a linear combination $v = x_1b_1 + \cdots + x_mb_m$, $x_i \in \R$. Then $f$ is a polynomial on $V$ of degree $n$ if the function $(x_1, \ldots , x_m) \mapsto P(x_1b_1+ \cdots +x_mb_m)$ is a polynomial from $\R^m$ to $\R$ of degree $n$. It is easy to check that this concept is independent of the choice of the basis, i.e. if $P$ is a polynomial with respect to a basis $\mathcal{B}$ then it is a polynomial with respect to any other basis. Now, if $V$ is an infinite dimensional vector space we say that a function $P: V \to \R$ is a polynomial function of degree $\leq n$ if its restriction to any finite dimensional subspace is a polynomial function of degree $\leq n$. We also recall that $P$ is called a {\it homogeneous polynomial of degree $n$} if for any $v \in V$ and any $c \in \R$ we have $P(cv) = c^n P(v)$. Equivalently, we can say that $P$ is a homogeneous polynomial on $V$ of degree $n$ if there is an $n$-linear map $F: \underbrace{V \times \cdots \times V}_{n} \to \R$ such that $$P(v) = F(v, \ldots, v),$$ for all $v \in V$.}

\begin{Prob} \label{prob-ext-poly}
Let $C \subset V$ be a convex cone (with apex at the origin) that spans $V$ (i.e. $C$ is full dimensional). Let $P: C \to \R$ be a function on $C$ such that there exists a polynomial function $Q: V \to \R$ that agrees with $P$ restricted to the cone $C$. Show that this polynomial $Q$ is unique, in other words, show that $P$ has a unique extension to a polynomial function on the whole $V$.
\end{Prob}

Now let us consider the algebra $\D = \D_V$ of constant coefficient differential operators on the vector space $V$.
For a vector $v \in V$, let $L_v$ be the 
differentiation operator (Lie derivative) on the space of polynomial functions on $V$ defined as follows. Let $f$ be a polynomial function on $V$. Then: $$L_v(f)(x) = \lim_{t \to 0} \frac{f(x+tv) - f(x)}{t}.$$
The algebra $\D$ is defined to be the commutative algebra generated by multiplications by scalars and by the Lie derivatives $L_v$ for all $v \in V$.

 When $V \cong \R^n$ is finite dimensional, $\D$ can be realized as follows: Fix a basis for $V$ and let $(x_1, \ldots, x_n)$ denote the coordinate functions with respect to this basis. Each element of $\D$ is then a polynomial, with constant coefficients, in the differential operators $\partial/\partial x_1, \ldots, \partial/\partial x_n$. That is: $$\D = \{ f(\partial/\partial x_1, \ldots, \partial/\partial x_n) \mid f = \sum_{\alpha = (a_1, \ldots, a_n)} c_\alpha x_1^{a_1} \cdots x_n^{a_n} \in \R[x_1, \ldots, x_n]\}.$$
{Thus, as an algebra $D$ is {\it very similar} to the algebra of polynomials on $V$ (in fact it is {\it dual} to this algebra).}

As before, let $P: V \to \R$ be a homogeneous polynomial 
function (of degree $n$) on a vector space $V$. To $(V, P)$ we associate an algebra $A_P$ as follows. As above, let $\D$ be the algebra of constant coefficient differential operators on the vector space 
$V$. Also let $I$ be the set of all differential operators $D \in \D$ such that $D \cdot P = 0$, i.e. those differential operators that annihilate $P$.

\begin{Prob}
Show that $I$ is an ideal in the algebra $\D$.
\end{Prob}

\begin{Def}[Algebra associated to a homogeneous polynomial]
We call the quotient algebra $A_P = \D / I$, {\it the algebra associated to the polynomial $P$}. 
\end{Def}

\begin{Prob}
Show that the algebra $A = A_P$ has a natural grading by $\Z_{\geq 0}$ (for this one shows that $I$ is a homogenous ideal). Let $A_i$ denote the $i$-th graded piece of $A$. Show that $A_{0} \cong A_{n} \cong \R$ and $A_{i} = \{0\}$, for any $i>n$.
Finally, show that the algebra $A$ has {\it Poincare duality}, i.e. for any $0 \leq i \leq n$, the bilinear map $A_{i} \times A_{n-i} \to A_{n} \cong \R$ given by multiplication, is non-degenerate. Thus, we have $\dim_\R(A_{i}) = \dim_\R(A_{n-i})$.
\end{Prob}

\section{Polytope algebra and ring of conditions}  \label{sec-polytope-alg-ring-cond}
Recall from Section \ref{sec-mixed-vol} that $\mathcal{P}$ denotes the space of all convex polytopes in $\R^n$.  
We first introduce the vector space of virtual polytopes. We need the following.
\begin{Prob}   \label{prob-Minkoski-sum-cancelative}
Show that the Minkowski sum is cancelative, i.e. if $\Delta_1, \Delta_2, \Delta \in \mathcal{P}$ are convex polytopes with $\Delta_1 + \Delta = \Delta_2 + \Delta$ then $\Delta_1 = \Delta_2$. 
\end{Prob}

The above problem shows that the set $\mathcal{P}$, of convex polytopes in $\R^n$, together with the Minkoski sum and multiplication with positive scalars can be formally extended to an (infinite dimensional) vector space $\mathcal{V}$. This vector space $\mathcal{V}$ is called the vector space of {\it virtual polytopes}. The elements in this vector space, namely {\it virtual polytopes}, are formal differences $\Delta_1 - \Delta_2$ of polytopes $\Delta_1, \Delta_2 \in \mathcal{P}$. Two formal differences $\Delta_1 - \Delta_2$ and $\Delta_1' - \Delta_2'$ are considered equal if $\Delta_1 + \Delta_2' = \Delta_1' + \Delta_2$. The volume and mixed volume functions extend to the vector space $\mathcal{V}$ of virtual polytopes in the obvious way (see Problem \ref{prob-ext-poly}).

For our application of convex polytopes to the ring of conditions we only need linear combinations of lattice polytopes in $\R^n$ (this is essentially because Newton polytopes of Laurent polynomials are all lattice polytopes by definition). We call the subspace of $\mathcal{V}$ spanned by lattice polytopes by $\mathcal{L}$. 

As a special case of the construction in Section \ref{sec-ring-polynomial}, to the vector space $\mathcal{L}$ and the volume polynomial $\vol$ we can associate an algebra $A_{\vol}$. We call it the {\it polytope algebra}. It turns out that, similar to the ring of balanced fans, the multiplication in the polytope algebra contains a great deal of information about number of solutions of systems of Laurent polynomial equations and more generally intersection numbers of subvarieties in $(\C^*)^n$. More precisely, the polytope algebra gives another description of the ring of conditions of $(\C^*)^n$ (see \cite{Kaz-Khov}):

\begin{Th}
The ring of conditions of the torus $(\C^*)^n$ is isomorphic to the polytope algebra $A_\vol$.
\end{Th}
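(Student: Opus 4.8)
The plan is to establish the isomorphism between the ring of conditions $\mathcal{R}((\C^*)^n)$ and the polytope algebra $A_\vol$ by producing explicit maps in both directions and checking they are mutually inverse ring homomorphisms, using the BKK theorem (Theorems~\ref{th-BKK} and \ref{th-BKK-ring-of-conditions}) as the bridge between the algebraic and combinatorial sides. The key observation is that both rings admit a common ``shadow'' on the ring of balanced fans $\mathcal{TR}$: by the theorem of the previous section, $\mathcal{R}((\C^*)^n) \cong \mathcal{TR}$, and by Theorem~\ref{th-tropical-hyper-Newton-polytope}, the normal fan construction sends a lattice polytope $\Delta$ to the balanced $(n-1)$-fan given by the weighted $(n-1)$-skeleton of $\Sigma_\Delta$. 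So the strategy reduces to: (i) show the assignment ``lattice polytope $\mapsto$ its weighted codimension-one skeleton'' extends to a well-defined linear map $\mathcal{L} \to \mathcal{TR}_{n-1}$, hence (by multiplying fans) a map $A_\vol \to \mathcal{TR} \cong \mathcal{R}((\C^*)^n)$; and (ii) show this map is a graded ring isomorphism.

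First I would set up the degree-one part. A hypersurface $H_f \subset (\C^*)^n$ has class in $\mathcal{R}((\C^*)^n)$ determined, via Theorem~\ref{th-BKK-ring-of-conditions}, by its Newton polytope $\Delta(f)$; equivalently its tropical fan $\trop(H_f)$ is the weighted $(n-1)$-skeleton of $\Sigma_{\Delta(f)}$. Because the ring of conditions identifies $H_f$ and $H_g$ exactly when they meet all complementary translates equally often, and because mixed volume is linear under Minkowski sum, the assignment $\Delta \mapsto [H_\Delta]$ is additive in $\Delta$ and extends linearly to virtual polytopes $\mathcal{L}$. This gives a linear map $\mathcal{L} \to \mathcal{R}((\C^*)^n)_1$. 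The crucial compatibility is that this map kills exactly the differential-operator relations defining $A_\vol$: a differential operator $D \in \D_\mathcal{L}$ annihilates $\vol$ iff the corresponding universal combination of hypersurface classes vanishes in $\mathcal{R}((\C^*)^n)$; this is precisely the content of BKK, since $\vol(\Delta_1 + \dots + \Delta_n)$ read off in multidegree $(1,\dots,1)$ is $n!\,V(\Delta_1,\dots,\Delta_n)$, which by Theorem~\ref{th-BKK-ring-of-conditions} equals the top intersection number $[H_1]\cdots[H_n]$. Since $A_\vol$ is generated in degree one (it is a quotient of $\D_\mathcal{L}$ by the annihilator ideal, and $\D_\mathcal{L}$ is generated by the $L_v$), the degree-one map extends to an algebra homomorphism $\Phi: A_\vol \to \mathcal{R}((\C^*)^n)$.

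To see $\Phi$ is an isomorphism I would use Poincaré duality on both sides together with a degree count. The last Problem of Section~\ref{sec-ring-polynomial} gives $A_\vol$ a grading with $A_0 \cong A_n \cong \R$, Poincaré duality $A_i \times A_{n-i} \to A_n$, and generation in degree one. On the other side, $\mathcal{R}((\C^*)^n) \cong \mathcal{TR} = \bigoplus_d \mathcal{TR}_d$ is also graded with top piece $\R$ (the top intersection number is a scalar) and satisfies an analogous Poincaré-type nondegeneracy coming from the well-definedness of stable intersection (Remark~\ref{rem-int-number-well-defined}). Now $\Phi$ is surjective in degree one, because every class in $\mathcal{R}((\C^*)^n)_1$ is a $\Z$-combination of hypersurface classes and each such arises from a (virtual) lattice polytope via the normal-fan/tropicalization dictionary; hence $\Phi$ is surjective in every degree by multiplicativity. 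Surjectivity plus matching of the $\R$ pairings in top degree forces $\Phi$ to be injective: if $0 \neq \alpha \in A_i$, Poincaré duality in $A_\vol$ gives $\beta \in A_{n-i}$ with $\alpha\beta \neq 0$ in $A_n \cong \R$, and $\Phi(\alpha\beta) = \Phi(\alpha)\Phi(\beta)$ is the corresponding nonzero intersection number in $\mathcal{R}((\C^*)^n)_n$ by BKK, so $\Phi(\alpha) \neq 0$. Thus $\Phi$ is a graded ring isomorphism.

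The main obstacle is step~(i): proving that the top-degree pairing of $A_\vol$ is faithfully computed by intersection numbers in $(\C^*)^n$, i.e.\ that $D \cdot \vol = 0$ in $A_\vol$ holds \emph{if and only if} the matching universal identity among hypersurface classes holds in the ring of conditions. The ``only if'' direction is BKK read polarization-by-polarization, but the ``if'' direction — that no \emph{extra} relations hold in $\mathcal{R}((\C^*)^n)$ beyond those forced by the volume polynomial — requires knowing that the ring of conditions is itself generated by hypersurface classes and has no relations in degree one other than linear equivalences of divisors, equivalently that $\mathcal{TR}_1$ is spanned by the codimension-one skeletons of normal fans with the expected relations. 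Handling this cleanly is where the identification $\mathcal{R}((\C^*)^n) \cong \mathcal{TR}$ and the structure theory of balanced fans (their equivalence relation, common subdivisions, and the fact that a balanced $1$-fan is exactly the normal-fan skeleton of a virtual polytope by the integral Pascal theorem, Problem~\ref{th-Pascal-int}) do the real work; the rest is bookkeeping with gradings and Poincaré duality.
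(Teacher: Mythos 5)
First, note that the paper itself does not prove this theorem: it is stated as a result of \cite{Kaz-Khov}, so there is no internal proof to compare against; your proposal has to be judged as a free-standing argument. Its overall architecture is the natural one (and is in the spirit of the cited work): send the Lie derivative $L_\Delta$ along a lattice polytope to the class of a generic hypersurface with Newton polytope $\Delta$, use Theorem~\ref{th-BKK-ring-of-conditions} to match the top-degree pairing of $A_\vol$ (derivatives of the volume polynomial, i.e.\ mixed volumes) with intersection numbers, and conclude by Poincar\'e duality. But two essential inputs are asserted rather than proved, and they are exactly where the content of the theorem lies. First, \emph{surjectivity}: you need that the ring of conditions is generated by hypersurface classes, i.e.\ that every codimension-$k$ class is a combination of products of $k$ divisor classes. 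Nothing in the body of the paper gives this; it is the nontrivial fact hinted at only in the footnote about Chow rings of smooth projective toric varieties being generated in degree two, and proving it requires either the De Concini--Procesi theory of good compactifications or a genuine structure theorem for balanced fans (every balanced fan is, up to equivalence, a combination of stable intersections of codimension-one skeletons of normal fans). Second, the \emph{kernel identification}: to get well-definedness and injectivity of your map $\Phi$ you must show that $D\cdot\vol=0$ forces the corresponding combination of products of hypersurface classes to pair to zero against \emph{all} subvarieties of complementary dimension (that is what the equivalence \eqref{equ-numerical-equiv} demands), not merely against complete intersections of hypersurfaces, which is all that BKK controls. Reducing ``all subvarieties'' to ``products of hypersurfaces'' is again the generation statement above, so your duality bookkeeping is circular until that statement is independently established. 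You candidly flag this in your last paragraph, but flagging the obstacle is not closing it.

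Two smaller inaccuracies in the same direction: the nondegeneracy of the top pairing on the ring of conditions holds essentially by definition of $\sim$, whereas on $\mathcal{TR}$ it is something to prove --- it does not ``come from the well-definedness of stable intersection'' (Remark~\ref{rem-int-number-well-defined} only says the intersection number is independent of the generic shift). And the integral Pascal theorem (Problem~\ref{th-Pascal-int}) gives only one direction of your dictionary: the weighted skeleton of a normal fan is balanced. The converse you invoke --- that every balanced fan of the relevant codimension is a virtual combination of such skeletons --- is a Minkowski-type existence statement, which is precisely the degree-one surjectivity you need and cannot get from Pascal. So the proposal is a reasonable roadmap, but the core of the theorem (generation by divisor classes and the resulting identification of the kernel with the annihilator ideal of $\vol$) is missing.
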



\begin{thebibliography}{99}
\bibitem[Bergman]{Bergman}
Bergman, G. M.
{\it The logarithmic limit-set of an algebraic variety}. 
Trans. Amer. Math. Soc. 157 1971 459--469.

\bibitem[Bernstein75]{Bernstein} Bernstein, D. N.
{\it The number of roots of a system of equations}.
English translation: Functional Anal. Appl. 9 (1975), no. 3, 183--185 (1976).

\bibitem[CLO]{Cox} Cox, D.; Little, J.; O'Shea, D. {\it Using algebraic geometry}. Graduate Texts in Mathematics, 185. Springer-Verlag, New York, 1998.

\bibitem[DeConcini-Procesi]{DeConcini-Procesi}
De Concini, C.; Procesi, C. {\it Complete symmetric varieties. II}. Intersection theory. Algebraic groups and related topics (Kyoto/Nagoya, 1983), 481--513, Adv. Stud. Pure Math., 6, North-Holland, Amsterdam, 1985.

\bibitem[Kazarnovskii-Khovanskii]{Kaz-Khov}
Kazarnovskii, B.; Khovanskii, A. G. {\it Newton polyhedra, tropical geometry and the ring of condition for $(\C^*)^n$}. arXiv:1705.04248

\bibitem[Kushnirenko76]{Kushnirenko} Kushnirenko, A. G.
{\it Polyedres de Newton et nombres de Milnor}. (French)
Invent. Math. 32 (1976), no. 1, 1--31.



\end{thebibliography}
\end{document}